\numberwithin{equation}{section}
\theoremstyle{definition}
\newtheorem{theorem}{Theorem}[section]
\newtheorem{lemma}[theorem]{Lemma}
\newtheorem{proposition}[theorem]{Proposition}
\newtheorem{remark}[theorem]{Remark}
\newcommand{\C}{\mathbb{C}}
\newcommand{\N}{\mathbb{N}}
\newcommand{\R}{\mathbb{R}}
\title[Cauchy problem of Zakharov system]{Upper bound of higher-order Sobolev norms for Zakharov system in one space dimension}
\date{}
\author[N. Kobayashi]{Nobutatsu Kobayashi*}
\thanks{* Department of Mathematics, Graduate School of Science, 
Tokyo University of Science\\
1-3 Kagurazaka, Shinjuku-ku, Tokyo 162-8601, Japan\\
Email : 1122509@ed.tus.ac.jp
}
\begin{document}
\begin{abstract}
    We study the Cauchy problem for the Zakharov system in one space dimension with the Diriclet boundary conditions. We establish the global well-posedness and the growth of higher-order Sobolev norms of solutions to the Zakharov system by using the modified energy method.

    Keywords : Zakharov system, growth of higher-order Sobolev norms, strong solution, global well-posedness

    2020 Mathematics Subject Classification: 35B45, 35B35, 35A01, 35A02
\end{abstract}

\maketitle



  

\section{Introduction}
In this paper we consider the initial-boundary value problem:
\begin{equation}
	\left\{\begin{alignedat}{3}
      &i\partial_t u+\partial_x^2 u=vu,&&(t,x)\in\R\times I,\\
      &\partial_t^2 v-\partial_x^2 v=\partial_x^2(|u|^2),&&(t,x)\in\R\times I,\\
      &(u(0,x),v(0,x),\partial_t v(0,x))=(u_0(x),v_0(x),v_1(x)),\ && x\in I,\\
      &u(t,x)=v(t,x)=0,&& t\in\R, x\in\partial I,
       \end{alignedat}\right.
       \label{eq:(1.1)}
\end{equation}
where $I\subset\R$ is an open interval and $u:\R\times I\to\C$, $v:\R\times I\to\R$. We rewrite \eqref{eq:(1.1)} as the following first-order problem:
\begin{equation}
	\left\{\begin{alignedat}{4}
      &\partial_t u=i\partial_x^2 u-ivu,&&(t,x)\in\R\times I,\\
      &\partial_t v=-\partial_x w,&&(t,x)\in\R\times I,\\
      &\partial_t w=-\partial_x v-\partial_x(|u|^2),&&(t,x)\in\R\times I,\\
      &(u(0,x),v(0,x),w(0,x))=(u_0(x),v_0(x),w_0(x)),\ &&x\in I,\\
      &u(t,x)=v(t,x)=w(t,x)=0,&& t\in\R, x\in\partial I,
\end{alignedat}\right.
       \label{eq:(1.2)}
\end{equation}
where $w:\R\times I\to\R$.

\vspace{3mm}
The solution of \eqref{eq:(1.2)} formally obeys the following conservative quantities:
\begin{align*}
    &\text{Mass} : M(u(t)):=\|u(t)\|_{L^2}^2,\\
    &\text{Energy} : E(U(t)):=\|\partial_x u(t)\|_{L^2}^2+\frac{1}{2}(\|v(t)\|_{L^2}^2+\|w(t)\|_{L^2}^2)+(v(t),|u(t)|^2)_{L^2},
\end{align*}
where $U=(u,v,w)$ and 
\begin{equation}
    (u,v)_{L^2}=\mathrm{Re}\displaystyle\int_{I}u(x)\overline{v(x)}dx.
\end{equation}
We define the energy space $X=H_0^1(I)\times L^2(I)\times L^2(I)$.

Physically, \eqref{eq:(1.1)} and \eqref{eq:(1.2)} were first introduced by Zakharov \cite{MR15} as a model which describes long-wavelength Langmuir turbulence in a plasma. In this system, $u$ denotes the envelope of the electric field and $v$ is the deviation of the ion density from its equilibrium. The system \eqref{eq:(1.2)} was given by Gibbons, Thornhill, Wardrop and Haar \cite{MR16} from a Langmuir formalism.

The initial value problem for the Zakharov system in $\R^N$ has been well studied and started with the result of Sulem--Sulem \cite{MR11}. Sulem--Sulem showed the existence and uniqueness of local solutions $(u,v,w)\in L^{\infty}([0,T];H^m(\R^N)\times H^{m-1}(\R^N)\times H^{m-1}(\R^N))$ for $m\ge 3$ with $N\le 3$ and of global solutions in one space dimension. Later, Added--Added \cite{MR12} proved the unique global solution $(u,v,w)\in L_{loc}^{\infty}(\R;H^m(\R^2)\times H^{m-1}(\R^2)\times H^{m-1}(\R^2))$ for $m\ge 3$. Note that these results are proved by the energy method, which does not rely on the Fourier transform. Moreover, Ozawa--Tsutsumi \cite{MR2} obtained the existence and uniqueness of local solutions $(u,v,w)\in C([0,T];H^2(\R^N)\times H^1(\R^N)\times H^1(\R^2))$ for $N\le 3$, by using the elliptic regularity theory. Later, Bourgain--Colliander \cite{MR3} obtains the existence and uniqueness of local solutions $(u,v,w)\in C([0,T];\\H^1(\R^2)\times L^2(\R^2)\times L^2(\R^2))$ for the Zakharov system in the energy space with $N=2$. The method of the proof depends on the Fourier restricted norm method. Furthermore, in \cite{MR4}, the results in arbitrary dimensions are discussed. On the other hand, there are few studies in general domains. One of the reasons for this is that the Strichartz estimates and the Bourgain's method cannot be applied to a general domain because the Fourier transform cannot be discussed in general domains. In \cite{MR1}, Ozawa and Tomioka obtain the unique global solution in a general domain for the Zakharov system in two space dimensions.

We verify the global result in one space dimension.

\begin{theorem} \label{theorem1}
    Let $(u_0,v_0,w_0)\in (H^2\cap H_0^1)(I)\times H_0^1(I)\times H_0^1(I)$. Then there exists the unique solution $(u,v,w)$ of \eqref{eq:(1.2)} such that
    \begin{equation}
        (u,v,w)\in C(\R;(H^2\cap H_0^1)(I)\times H_0^1(I)\times H_0^1(I))\cap C^1(\R;L^2(I)\times L^2(I)\times L^2(I))\label{eq:1.3}.
    \end{equation}
    Moreover, the following properties hold:
    \begin{enumerate}
        \item $M(u(t))=M(u_0),\hspace{1mm}E(U(t))=E(U(0))$ for all $t\in\R$.
        \item $(u,v,w)$ depends continuously on $(u_0,v_0,w_0)$ in the following sense; if
        \begin{equation}
        (u_0^n,v_0^n,w_0^n)\to (u_0,v_0,w_0)\
    \mathrm{in}\ H^2(I)\times H^1(I)\times H^1(I)\ \text{as}\ n\to\infty
        \end{equation}
    and if $(u_n,v_n,w_n)$ is the corresponding solution of \eqref{eq:(1.2)}, then
    \begin{equation}
    (u_n,v_n,w_n)\to (u,v,w)\in C([-T,T];H^s(I)\times H^{\sigma}(I)\times H^{\sigma}(I))
    \end{equation}
    for any $(s, \sigma)\in[0,2)\times [0,1)$.
    \end{enumerate}
\end{theorem}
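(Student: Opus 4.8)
The plan is to combine the classical energy method—appropriate here since the Fourier restriction, Strichartz estimates and Bourgain spaces used in the whole-space theory are unavailable under Dirichlet boundary conditions—with an approximation scheme, extracting the global bound from conservation laws together with a modified higher-order energy. Because the forcing $vu$ in the Schr\"odinger equation only lies in $H_0^1$ when $v\in H_0^1$, a Banach fixed point directly in $(H^2\cap H_0^1)\times H_0^1\times H_0^1$ cannot close, so I would instead construct local solutions by a Galerkin scheme in the Dirichlet eigenbasis of $-\partial_x^2$ on $I$ (whose elements already satisfy all the boundary conditions), derive uniform a priori bounds, and pass to the limit by compactness. On the approximate level every integration by parts below is legitimate, and all boundary contributions vanish: from $u=v=w=0$ on $\partial I$ and the equations \eqref{eq:(1.2)} one obtains $\partial_t u=0$, hence $\partial_x^2 u=0$, and $\partial_x v=\partial_x w=0$ on $\partial I$.

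First I would record the conservation of $M$ and $E$ in part (1) by differentiating in $t$ and using \eqref{eq:(1.2)}; the only non-trivial cancellations are $2\,\mathrm{Re}\int i\partial_x^2 u\,\overline u=0$ after integrating by parts and $2\,\mathrm{Re}\int(-i)vu\,\overline u=0$ since $v|u|^2$ is real. To turn conservation of $E$ into a global $H_0^1\times L^2\times L^2$ bound I would control the indefinite term $(v,|u|^2)_{L^2}$ by the one-dimensional Gagliardo--Nirenberg inequality $\|u\|_{L^4}^2\le C\|u\|_{L^2}^{3/2}\|\partial_x u\|_{L^2}^{1/2}$ together with mass conservation, so that $|(v,|u|^2)_{L^2}|\le\tfrac12\|\partial_x u\|_{L^2}^2+\tfrac14\|v\|_{L^2}^2+C(M)$; absorbing these into $E$ bounds $\|\partial_x u\|_{L^2}^2+\|v\|_{L^2}^2+\|w\|_{L^2}^2$ for all time.

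The heart of the matter, and the \textbf{main obstacle}, is the higher-order a priori estimate, where the naive energy loses a derivative: differentiating the Schr\"odinger equation twice and testing against $\partial_x^2 u$ produces the term $2\int\partial_x^2 v\,\mathrm{Im}(u\,\overline{\partial_x^2 u})$, which requires $\partial_x^2 v\in L^2$, one derivative more than the class $v\in H_0^1$ allows. I would overcome this with the modified energy
\[
N(t):=\|\partial_t u(t)\|_{L^2}^2+\|\partial_x v(t)\|_{L^2}^2+\|\partial_x w(t)\|_{L^2}^2,
\]
that is, by estimating $\partial_t u$ rather than $\partial_x^2 u$. Indeed $f:=\partial_t u$ solves $\partial_t f=i\partial_x^2 f-i(vf-u\,\partial_x w)$ (using $\partial_t v=-\partial_x w$), and its $L^2$ estimate only sees $\partial_x w\in L^2$, giving $\tfrac{d}{dt}\|f\|_{L^2}^2\le C\|\partial_x w\|_{L^2}\|u\|_{H^1}\|f\|_{L^2}$. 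The full second derivative is then recovered elliptically from the equation itself, $\partial_x^2 u=-i\,\partial_t u+vu$, so $\|u\|_{H^2}\le C(\|\partial_t u\|_{L^2}+C(E))$. Feeding this into $\tfrac{d}{dt}\big(\|\partial_x v\|_{L^2}^2+\|\partial_x w\|_{L^2}^2\big)=-2\int\partial_x w\,\partial_x^2(|u|^2)$—the cross terms $\int\partial_x(\partial_x v\,\partial_x w)$ dropping by the boundary analysis above—closes a Gronwall inequality $\tfrac{d}{dt}N\le C(E)(1+N)$. This simultaneously rules out finite-time blow-up, yielding the global solution in the class \eqref{eq:1.3}, and produces the announced exponential-in-time upper bound for $\|u(t)\|_{H^2}+\|v(t)\|_{H^1}+\|w(t)\|_{H^1}$.

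Finally, for part (2) I would prove uniqueness by a Gronwall estimate for the difference of two solutions in $L^2\times L^2\times L^2$, the coefficients being controlled by the now-global solution norms; here the forcing $|u_1|^2-|u_2|^2$ is handled after integrating $\int\partial_x w\,(\cdots)$ by parts, so that only $\|u_1-u_2\|_{L^2}$ and the (finite) $L^\infty$ norms of $u_1,u_2$ appear. The same derivative loss that forced the modified energy obstructs Lipschitz continuity of the data-to-solution map in the top norm, so continuous dependence in the strictly weaker spaces $H^s\times H^\sigma\times H^\sigma$ with $(s,\sigma)\in[0,2)\times[0,1)$ would be obtained by a Bona--Smith argument: mollify the data, use the uniform higher-order bounds and the convergence of the smoothed solutions, and interpolate between the persistence bound in $H^2\times H^1\times H^1$ and the low-norm difference estimate. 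I expect the delicate points to be the precise boundary analysis that legitimizes every integration by parts and the bookkeeping in the Bona--Smith interpolation.
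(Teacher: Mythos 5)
Your overall skeleton (regularize, prove conservation laws, close a higher-order energy built on $\|\partial_t u\|_{L^2}$ rather than $\|\partial_x^2 u\|_{L^2}$, recover $\partial_x^2 u$ elliptically from the equation, then pass to the limit) is the same as the paper's, and your modified energy $N(t)$ is essentially the paper's $F_n$. But there is a genuine gap at the step the paper considers the real difficulty: uniqueness and continuous dependence. Your $L^2\times L^2\times L^2$ Gronwall argument does not close. In the identity for $\tfrac{d}{dt}\tfrac12(\|v_1-v_2\|_{L^2}^2+\|w_1-w_2\|_{L^2}^2)$ the term $-(\partial_x(|u_1|^2-|u_2|^2),\,w_1-w_2)_{L^2}$ must put a derivative somewhere: keeping it on $|u_1|^2-|u_2|^2$ requires $\|\partial_x(u_1-u_2)\|_{L^2}$, while your proposed integration by parts puts it on $w_1-w_2$ and produces $\|(|u_1|^2-|u_2|^2)\|_{L^2}\,\|\partial_x(w_1-w_2)\|_{L^2}$; neither factor is controlled by the $L^2$-level difference quantity, so it is false that "only $\|u_1-u_2\|_{L^2}$ and $L^\infty$ norms appear." Treating $\|\partial_x(w_1-w_2)\|_{L^2}$ as a bounded coefficient yields only $D'\le C(D+\sqrt{D})$, and $D'\le C\sqrt{D}$ with $D(0)=0$ admits $D(t)=C^2t^2/4$, so uniqueness does not follow. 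The same derivative loss blocks the low-norm Lipschitz estimate your Bona--Smith argument would need. The paper's resolution is precisely its Section 4: run the difference estimate at the $H^1\times L^2\times L^2$ level and compensate the non-closable terms by adding correction terms to the energy (the modified energy $\mathcal{E}_{m,n}$, containing inner products such as $2(J_m(u_m-u_n),J_m(v_m-v_n)\cdot J_m u_m)_{L^2}$), whose time derivatives cancel the loss via $\partial_t v=-\partial_x w$. Some such correction (or an equivalent device, e.g.\ estimating the wave part in $H^{-1}$) is a missing idea in your proposal, and without it both part (2) of the theorem and uniqueness are unproved.

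There are also two problems with your construction. First, a Dirichlet eigenbasis of $-\partial_x^2$, and the compactness you invoke to pass to the limit, require $I$ to be bounded; the theorem is stated for an arbitrary open interval, and the paper explicitly avoids compactness for this reason (its Yosida regularization $J_n$ makes the approximate problem an ODE in the energy space, and convergence is obtained by the Cauchy-sequence argument above, not by compactness). Second, your claim that "on the approximate level every integration by parts is legitimate and all boundary contributions vanish" fails for Galerkin: the identities $\partial_x v=\partial_x w=0$ on $\partial I$ are consequences of the PDE (e.g.\ $\partial_x w=-\partial_t v$ and $v|_{\partial I}\equiv 0$), but the projected equation reads $\partial_t v_N=-P_N\partial_x w_N$, and $P_N\partial_x w_N$ vanishes on $\partial I$ automatically (it is a finite sine combination), which gives no information about $\partial_x w_N|_{\partial I}$. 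Hence the cross term in your wave energy equals the boundary contribution $[\partial_x v_N\,\partial_x w_N]_{\partial I}$, which in general neither vanishes nor is controlled; the paper regularizes only the nonlinearity exactly so that $\partial_t v_n=-\partial_x w_n$ holds identically and this cancellation survives. Finally, even granting the limit passage, weak-$*$ compactness yields only $u\in L_{loc}^{\infty}\cap C_w$ in $H^2$, and your interpolation gives $C([-T,T];H^s)$ only for $s<2$; the full statement $u\in C(\R;(H^2\cap H_0^1)(I))$ in \eqref{eq:1.3} needs the additional argument of the paper's Proposition 5.5, showing $\limsup_{t\to 0}\|\partial_t u(t)\|_{L^2}\le\|\partial_t u(0)\|_{L^2}$ and combining weak convergence with norm convergence, which your proposal omits.
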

\begin{remark}
\noindent (1) We can also show the statement \eqref{eq:1.3} for $N=2$ in the same argument. This means that we obtain the unique solution of \eqref{eq:(1.2)} more regular than in \cite{MR1}; the first component $u$ of the solution obtained in \cite{MR1} only belongs to $C_w(\R;(H^2\cap H_0^1)(I))$. Note that this argument does not rely on dimensions.\\
\noindent (2) Boundedness of the interval $I$ is not assumed because we do not use any compactness argument.
\noindent (3) Since $H^1$ is embedded in $L^{\infty}$, we do not need the Yudovitch argument based on the Sobolev inequalities in the critical case, which is a key idea in two space dimensions \cite{MR1}. 
\end{remark}


Growth estimates of higher-order Sobolev norms of the solution to Schr\"{o}dinger equations in $\R^N$ and the torus $\mathbb{T}^N$ have been investigated (e.g. Bourgain \cite{MR5}). Some results on the Zakharov system have been also obtained. For example, Colliander-Staffilani \cite{MR14}
showed that
\begin{align}
    \|u(t)\|_{H^s}\le C|t|^{(s-1)+}
\end{align}
 holds for solutions to the system with small initial data, where $a$+$\hspace{1mm}(a\in\R)$ denotes $a+\varepsilon$ for any $\varepsilon>0$. There are some studies on growth estimates of the Sobolev norms of solutions to the Zakharov system in a general domain (e.g. \cite{MR1,MR6}). Especially in \cite{MR1}, Ozawa and Tomioka obtain that the higher-order Sobolev norms grow at most exponentially in two space dimensions:
\begin{align}
    \|u(t)\|_{H^2}+\|v(t)\|_{H^1}+\|w(t)\|_{H^1}\le C\exp{(C|t|)},
\end{align}
where $C$ is a positive constant depends only on $\|u_0\|_{H^2}$, $\|v_0\|_{H^1}$ and $\|w_0\|_{H^1}$.

However, there seem to be no results on the higher-order Sobolev norms in one dimension. Hence, we studied this and obtained the following result.

\begin{theorem}
    Let $(u,v,w)$ be the solution of \eqref{eq:(1.2)} with $(u_0,v_0,w_0)\in (H^2\cap H_0^1)(I)\times H_0^1(I)\times H_0^1(I)$ obtained in Theorem \ref{theorem1}. Then there exists $C=C(\|u_0\|_{H^2},\|v_0\|_{H^1},\|w_0\|_{H^1})>0$ such that
    \begin{equation}
        \|u(t)\|_{H^2}+\|v(t)\|_{H^1}+\|w(t)\|_{H^1}\le C(1+|t|^2)\hspace{2mm}\mathrm{for}\hspace{1mm}\mathrm{all}\hspace{2mm}t\in\R.
    \end{equation}
\end{theorem}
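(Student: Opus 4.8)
The plan is to augment the conservation laws from Theorem~\ref{theorem1} with a second-order modified energy built from the time-differentiated system. First I would record the uniform lower-order bounds: mass conservation gives $\|u(t)\|_{L^2}=\|u_0\|_{L^2}$, and since $H^1(I)\hookrightarrow L^\infty(I)$ in one dimension, the indefinite coupling term $(v,|u|^2)_{L^2}$ in $E(U)$ is controlled by $\|v\|_{L^2}\|u\|_{L^\infty}^2\lesssim\|v\|_{L^2}\|u\|_{H^1}^2$ and absorbed by Young's inequality; energy conservation then yields a bound on $\|u(t)\|_{H^1}+\|v(t)\|_{L^2}+\|w(t)\|_{L^2}$ uniform in $t$. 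Setting $p=\partial_t u$, $q=\partial_t v$, $r=\partial_t w$, the equations let me trade the target norms for $L^2$ norms of these: from $\partial_x^2 u=vu-i\partial_t u$ one gets $\|u\|_{H^2}\lesssim\|p\|_{L^2}+1$, while $\partial_x w=-q$ and $\partial_x v=-r-\partial_x(|u|^2)$ give $\|w\|_{H^1}\lesssim\|q\|_{L^2}+1$ and $\|v\|_{H^1}\lesssim\|r\|_{L^2}+1$. Hence it suffices to bound $\mathcal G(t):=(\|p\|_{L^2}^2+\|q\|_{L^2}^2+\|r\|_{L^2}^2)^{1/2}$.

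Next I would introduce the functional
\[
  \mathcal H(t)=\|p\|_{L^2}^2+\tfrac12\|\partial_x v\|_{L^2}^2+\tfrac12\|\partial_x w\|_{L^2}^2,
\]
which is comparable to $\mathcal G(t)^2$ up to an additive constant. Differentiating the Dirichlet conditions in $t$ and using the equations produces the boundary relations $p=q=r=0$, $\partial_x v=\partial_x w=0$ and $\partial_x(|u|^2)=0$ on $\partial I$, so that every integration by parts below has vanishing boundary contribution. Computing $\frac{d}{dt}\mathcal H$, the Laplacian contribution to $\frac{d}{dt}\|p\|_{L^2}^2$ drops by self-adjointness, the leading wave interaction $\int_I\partial_x(\partial_x v\,\partial_x w)\,dx$ integrates to a vanishing boundary term, and the surviving terms collapse to
\[
  \frac{d}{dt}\mathcal H=\int_I q\bigl(2\,\mathrm{Im}(u\bar p)+\partial_x^2(|u|^2)\bigr)\,dx.
\]
The crux of the argument is the pointwise identity $2\,\mathrm{Im}(u\bar p)+\partial_x^2(|u|^2)=2v|u|^2+2|u_x|^2$, obtained by substituting $p=i\partial_x^2 u-ivu$: the top-order pieces $\mathrm{Re}(\bar u\,\partial_x^2 u)$ cancel exactly. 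This cancellation is the main obstacle and the reason the modified energy, rather than a naive $\|\partial_x^2 u\|_{L^2}^2$, is needed — without it each of the two cross terms is separately of size $\|q\|_{L^2}\|u\|_{H^2}\sim\mathcal G^2$, which would only close by Gronwall into exponential growth.

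Finally, with the cancellation in hand, $\frac{d}{dt}\mathcal H=2\int_I q(v|u|^2+|u_x|^2)\,dx$, and I would estimate the right-hand side by $\|q\|_{L^2}\bigl(\|v\|_{L^2}\|u\|_{L^\infty}^2+\|u_x\|_{L^4}^2\bigr)$. The first factor is bounded, and the Gagliardo–Nirenberg inequality together with the uniform $H^1$ bound gives $\|u_x\|_{L^4}^2\lesssim\|u\|_{H^1}^{3/2}\|u\|_{H^2}^{1/2}\lesssim\|u\|_{H^2}^{1/2}\lesssim(\mathcal G+1)^{1/2}$. Thus $\frac{d}{dt}\mathcal H\lesssim\|q\|_{L^2}(\mathcal G+1)^{1/2}\lesssim\mathcal G^{3/2}+\mathcal G$, which, using $\mathcal H\sim\mathcal G^2$, becomes $\frac{d}{dt}\mathcal G\lesssim\mathcal G^{1/2}+1$. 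Integrating $\frac{d}{dt}(1+\mathcal G)^{1/2}\lesssim1$ yields $\mathcal G(t)\lesssim(1+|t|)^2$, and therefore $\|u(t)\|_{H^2}+\|v(t)\|_{H^1}+\|w(t)\|_{H^1}\lesssim\mathcal G(t)+1\lesssim 1+|t|^2$, as claimed. All manipulations are performed on smooth approximating solutions and transferred to the solution of Theorem~\ref{theorem1} by its continuous dependence on the data, the constant depending only on $\|u_0\|_{H^2}$, $\|v_0\|_{H^1}$ and $\|w_0\|_{H^1}$.
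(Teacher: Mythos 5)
Your core argument coincides with the paper's own proof: the paper's Proposition \ref{prop_3_1} uses exactly your functional, namely $F_n(t)=\|\partial_t u_n\|_{L^2}^2+\tfrac12\bigl(\|\partial_x v_n\|_{L^2}^2+\|\partial_x w_n\|_{L^2}^2\bigr)$, proves the same equivalence with $\|\partial_x^2 u\|_{L^2}^2+\|\partial_x v\|_{L^2}^2+\|\partial_x w\|_{L^2}^2$ through the equation, exploits the same top-order cancellation (the identity $\partial_x^2|u|^2-2\,\mathrm{Re}(\bar u\,\partial_x^2 u)=2|\partial_x u|^2$, which is your pointwise identity after substituting $p=i\partial_x^2u-ivu$), arrives at the same differential inequality $F'\lesssim (F+1)^{3/4}$, and integrates it to the same $(1+|t|^2)$ growth. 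So the heart of the proposal is correct and is the paper's argument.

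There are, however, two gaps in the write-up. A minor one: in your lower-order step, $|(v,|u|^2)_{L^2}|\lesssim\|v\|_{L^2}\|u\|_{H^1}^2$ cannot be closed by Young's inequality --- the gradient enters with power $2$ and a data-dependent coefficient, so for large data nothing absorbs. You need the one-dimensional Gagliardo--Nirenberg bound $\|u\|_{L^4}^2\lesssim\|u\|_{L^2}^{3/2}\|\partial_x u\|_{L^2}^{1/2}$, where the power $1/2<2$ on the gradient makes the absorption work regardless of the data size (this is how the paper's Lemma 2.4 proceeds). The more serious gap is the final transfer step. The manipulations in $\frac{d}{dt}\mathcal H$ (pairings such as $\int_I \partial_x\partial_t v\,\partial_x v\,dx$ with $\partial_x\partial_t v=-\partial_x^2 w\in H^{-1}$, and boundary traces of $\partial_x v$, $\partial_x w$) are not defined at the regularity $H^2\times H^1\times H^1$ of the solution itself, so one must indeed work on approximations; but ``smooth approximating solutions'' of \eqref{eq:(1.2)} are not available --- Theorem \ref{theorem1} provides no higher-order well-posedness --- and the continuous dependence it does provide holds only in $H^s\times H^\sigma\times H^\sigma$ with $s<2$, $\sigma<1$, which is too weak to transport an $H^2\times H^1\times H^1$ bound. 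The paper resolves both issues at once: it runs the entire computation on the solutions of the Yosida-regularized system \eqref{eq:(2.1)}, where one must (and can) check that the cancellation survives the insertion of the operators $J_n$ --- this works precisely because the regularized nonlinearities $J_n(J_nv_n\cdot J_nu_n)$ and $\partial_x J_n(|J_nu_n|^2)$ are matched --- and then transfers the uniform bound to $(u,v,w)$ by weak lower semicontinuity of norms (for each $t$, $u_n(t)\rightharpoonup u(t)$ gives $\|\partial_x^2u(t)\|_{L^2}\le\liminf_n\|\partial_x^2u_n(t)\|_{L^2}\le M_2(t)$, and similarly for $v,w$), not by continuous dependence. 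Replacing your last paragraph by this regularize-and-pass-to-the-limit scheme makes the proof complete.
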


This paper is organized as follows. In Section $2$, we construct the solutions for the approximate problem and obtain uniform estimates for them. Next, we establish the uniform bounds in $H^2\times H^1\times H^1$ under the uniform bounds in $H^1\times L^2\times L^2$. In Section $4$, we prove that the sequence of approximate solutions forms a Cauchy sequence in $C([-T,T];H_0^1(I)\times L^2(I)\times L^2(I))$ by introducing a modified energy. Finally, we construct the solution of \eqref{eq:(1.2)} and argue the regularity with respect to the first component $u$ of the solution based on \cite{MR7}.

\section{Construction of approximate solutions}
In this section, we consider the following regularized system
\begin{equation}
	\left\{\begin{alignedat}{2}
      &\partial_t u_n=i\partial_x^2 u_n-iJ_n(J_n v_n\cdot J_n u_n),\\
      &\partial_t v_n=-\partial_x w_n,\\
      &\partial_t w_n=-\partial_x v_n-\partial_x J_n(|J_n u_n|^2)\\
       \end{alignedat}\right.
       \label{eq:(2.1)}
\end{equation}
with initial conditions
\begin{equation}
    (u_n(0),v_n(0),w_n(0))=(J_n u_0,J_n v_0,J_n w_0)\in (H^2\cap H_0^1)(I)\times H_0^1(I)\times H_0^1(I),
\end{equation}
where $J_n$ is the Yosida regularization defined by
\begin{equation}
    J_n=\left(1-\frac{1}{n}\partial_x^2\right)^{-1}.
\end{equation}
Note that $\partial_x^2$ is a self-adjoint operator in $L^2(I)$ with the dense domain $(H^2\cap H_0^1)(I)$. We summarize several properties of $J_n$ needed for the proof of the main theorems.
\begin{lemma}
    \label{lemma_2_1}
    Let $X$ be either of the spaces $L^2(I)$, $H_0^1(I)$, $(H^2(I)\cap H_0^1)(I)$ and let $X^*$ be its dual space. Then, the following properties hold:

    \vspace{1mm}
    \noindent (1) $\langle J_n f,g\rangle_{X,X^*}=\langle f,J_n g\rangle_{X,X^*}\hspace{2mm}\mathrm{for}\hspace{2mm}f\in X,\hspace{1mm}g\in X^*$.
    
    \vspace{1mm}
    \noindent (2) $J_n$ are bounded self-adjoint operators in $L^2(I)$ with $J_n(L^2(I))=(H^2(I)\cap H_0^1)(I)$.

    \vspace{1mm}
    \noindent (3) $\|J_n\|_{\mathcal{L}(X)}\le 1$.

    \vspace{1mm}
    \noindent (4) For any $u\in X$,
        $J_n u\to u\hspace{2mm}\mathrm{in}\hspace{2mm}X\hspace{2mm}\mathrm{as}\hspace{2mm}n\to\infty$.

    \vspace{1mm}
    \noindent (5) For any $u\in L^2(I)$,
    \begin{equation}
        \|\partial_x J_n u\|_{L^2}\le n\|u\|_{L^2},\quad \|\partial_x^2 u\|_{L^2}\le n\|u\|_{L^2}.
    \end{equation}
\end{lemma}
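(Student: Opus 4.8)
My plan is to route everything through the self-adjoint functional calculus of the Dirichlet operator $A:=-\partial_x^2$, which the paper has already declared self-adjoint on $L^2(I)$ with domain $(H^2\cap H_0^1)(I)$. Since $A$ is nonnegative, the spectral theorem supplies a projection-valued measure $E$ on $[0,\infty)$ with $A=\int_0^\infty\lambda\,dE(\lambda)$, and for the bounded Borel symbol $\phi_n(\lambda):=(1+\lambda/n)^{-1}$ one has $J_n=\phi_n(A)=\int_0^\infty\phi_n(\lambda)\,dE(\lambda)$. I would record at the outset the domain identifications $D(A^{1/2})=H_0^1(I)$ and $D(A)=(H^2\cap H_0^1)(I)$, together with the identities $\|A^{1/2}u\|_{L^2}=\|\partial_xu\|_{L^2}$ and $\|Au\|_{L^2}=\|\partial_x^2u\|_{L^2}$ and the equivalence of the graph norms with the Sobolev norms; these are the standard characterizations of the form domain and operator domain of the Dirichlet Laplacian, and they let me treat the three choices of $X$ uniformly as $D(A^{k/2})$, $k=0,1,2$.

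Granting this, items (2), (3) and (5) are immediate sup-norm estimates in the functional calculus. Because $0<\phi_n\le1$ is real-valued, $J_n=\phi_n(A)$ is a bounded self-adjoint operator on $L^2(I)$ with $\|J_n\|_{\mathcal L(L^2)}=\sup_{\lambda\ge0}\phi_n(\lambda)\le1$, and $J_n$ is a bijection of $L^2(I)$ onto $D(A)=(H^2\cap H_0^1)(I)$ with inverse $1+A/n$, which gives (2). Since $\phi_n(A)$ commutes with $A$ and with $A^{1/2}$, it preserves each domain $D(A^{k/2})$ and contracts each seminorm, e.g. $\|A^{1/2}J_nu\|_{L^2}=\|J_nA^{1/2}u\|_{L^2}\le\|A^{1/2}u\|_{L^2}$, so $\|J_n\|_{\mathcal L(X)}\le1$ on all three spaces, which is (3). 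For (5) I compute the two elementary suprema $\sup_{\lambda\ge0}\lambda^{1/2}\phi_n(\lambda)=\sqrt n/2$ (attained at $\lambda=n$) and $\sup_{\lambda\ge0}\lambda\,\phi_n(\lambda)=n$, whence $\|\partial_xJ_nu\|_{L^2}=\|A^{1/2}J_nu\|_{L^2}\le(\sqrt n/2)\|u\|_{L^2}\le n\|u\|_{L^2}$ and $\|\partial_x^2J_nu\|_{L^2}=\|AJ_nu\|_{L^2}\le n\|u\|_{L^2}$ for $u\in L^2(I)$; I read the second inequality of (5) as $\|\partial_x^2J_nu\|_{L^2}\le n\|u\|_{L^2}$ (the apparent omission of $J_n$).

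For (4) I would use dominated convergence in the spectral integral: $\|(J_n-I)u\|_{L^2}^2=\int_0^\infty|\phi_n(\lambda)-1|^2\,d\|E(\lambda)u\|^2$, where $\phi_n(\lambda)\to1$ pointwise and $|\phi_n-1|\le1$, so the integral tends to $0$ for every $u\in L^2(I)$. The cases $X=H_0^1(I)$ and $X=(H^2\cap H_0^1)(I)$ then follow by applying this $L^2$ statement to $A^{1/2}u$ and to $Au$ respectively, again because $J_n$ commutes with $A^{1/2}$ and $A$. For the duality identity (1), the case $X=L^2(I)$ is exactly the $L^2$ self-adjointness from (2); for the other two I would exploit the Gelfand triple $X\hookrightarrow L^2(I)\hookrightarrow X^*$, in which $\langle\cdot,\cdot\rangle_{X,X^*}$ extends the $L^2$ inner product. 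Approximating $g\in X^*$ by elements $g_k\in L^2(I)$ (dense in $X^*$) and using $(J_nf,g_k)_{L^2}=(f,J_ng_k)_{L^2}$, the identity $\langle J_nf,g\rangle_{X,X^*}=\langle f,J_ng\rangle_{X,X^*}$ follows by continuity.

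I expect the only genuinely delicate points to be (i) the identifications $D(A^{1/2})=H_0^1(I)$ and $D(A)=(H^2\cap H_0^1)(I)$ with the equivalence of graph and Sobolev norms, since this is where the Dirichlet condition and the possibly unbounded geometry of $I$ enter and where one must avoid any compactness argument; and (ii) the transfer of self-adjointness to the dual pairings in (1), where the action of $J_n$ on $X^*$ must be defined consistently as the adjoint across the Gelfand triple. Everything else reduces to the pointwise behaviour of the single scalar symbol $\phi_n(\lambda)=(1+\lambda/n)^{-1}$.
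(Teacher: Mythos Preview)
Your spectral-calculus argument is correct: writing $J_n=\phi_n(A)$ with $A=-\partial_x^2$ and $\phi_n(\lambda)=(1+\lambda/n)^{-1}$ reduces (2)--(5) to elementary bounds on $\phi_n$, and your Gelfand-triple density argument for (1) is the standard way to pass self-adjointness to the dual pairing. The paper itself does not prove this lemma at all---it simply cites references (\cite{MR8,MR1,MR9})---so your proposal is in fact considerably more detailed than what appears there; your reading of the second inequality in (5) as $\|\partial_x^2 J_n u\|_{L^2}\le n\|u\|_{L^2}$ is also the intended statement.
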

For the proof, see \cite{MR8,MR1,MR9}.
\begin{lemma}(\cite{MR1})
\label{lemma_2_2}
    For any $m,n\in\mathbb{N}\hspace{1mm}(m>n)$, $u\in H_{0}^1(I)$, 
    \begin{align*}
        &\|(J_m-J_n)u\|_{L^2}\le \frac{1}{n^{1/2}}\|\partial_x u\|_{L^2},\\
        &\|(J_m-J_n)u\|_{L^4}^2\le C\frac{1}{n^{1/2}}\|\partial_x u\|_{L^2}^2.
    \end{align*}
\end{lemma}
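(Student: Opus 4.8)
The plan is to diagonalise the problem through the spectral calculus of the non-negative self-adjoint operator $A:=-\partial_x^2$ on $L^2(I)$, whose quadratic form domain is $H_0^1(I)$ and which satisfies $\langle Au,u\rangle_{L^2}=\|\partial_x u\|_{L^2}^2$. Writing $A=\int_0^\infty\lam\,dE_\lam$ and setting $d\mu_u(\lam):=d\langle E_\lam u,u\rangle_{L^2}$, a positive measure of total mass $\|u\|_{L^2}^2$, we have $J_n=(1+A/n)^{-1}=n(n+A)^{-1}$, i.e. $J_n$ acts as the bounded multiplier $\lam\mapsto n/(n+\lam)$. Subtracting the two multipliers (equivalently, via the resolvent identity) gives, for $m>n$,
\begin{equation*}
    g(\lam):=\frac{m}{m+\lam}-\frac{n}{n+\lam}=\frac{(m-n)\lam}{(m+\lam)(n+\lam)}\ge 0,
\end{equation*}
so that $J_m-J_n$ is the operator with spectral multiplier $g$. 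Since each $J_n$ maps $L^2(I)$ into $(H^2\cap H_0^1)(I)$ by Lemma \ref{lemma_2_1}, the element $f:=(J_m-J_n)u$ lies in $H_0^1(I)$, and functional calculus yields the identities
\begin{equation*}
    \|f\|_{L^2}^2=\int_0^\infty g(\lam)^2\,d\mu_u(\lam),\qquad \|\partial_x f\|_{L^2}^2=\int_0^\infty\lam\,g(\lam)^2\,d\mu_u(\lam).
\end{equation*}

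The core of the argument is two elementary pointwise bounds on $g$. From $0\le m-n\le m\le m+\lam$ we get $(m-n)^2\le(m+\lam)^2$, and combining this with $n\lam\le(n+\lam)^2$ yields
\begin{equation*}
    g(\lam)^2=\frac{(m-n)^2\lam^2}{(m+\lam)^2(n+\lam)^2}\le\frac{\lam^2}{(n+\lam)^2}\le\frac{\lam}{n}\qquad(\lam\ge 0).
\end{equation*}
On the other hand $(m-n)\lam\le(m+\lam)(n+\lam)$ gives $0\le g(\lam)\le 1$, hence $g(\lam)^2\le 1$. Integrating these two bounds against $d\mu_u$ and using $\int_0^\infty\lam\,d\mu_u=\|\partial_x u\|_{L^2}^2$ gives at once
\begin{equation*}
    \|f\|_{L^2}^2\le\frac{1}{n}\|\partial_x u\|_{L^2}^2,\qquad \|\partial_x f\|_{L^2}^2\le\|\partial_x u\|_{L^2}^2,
\end{equation*}
the first of which is precisely the claimed $L^2$ estimate.

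For the $L^4$ estimate I would extend $f\in H_0^1(I)$ by zero to a function in $H^1(\R)$ and invoke the one-dimensional Gagliardo--Nirenberg inequality $\|f\|_{L^\infty}^2\le 2\|f\|_{L^2}\|\partial_x f\|_{L^2}$, which gives $\|f\|_{L^4}^4\le\|f\|_{L^\infty}^2\|f\|_{L^2}^2\le 2\|f\|_{L^2}^3\|\partial_x f\|_{L^2}$, that is $\|f\|_{L^4}^2\le C\|f\|_{L^2}^{3/2}\|\partial_x f\|_{L^2}^{1/2}$. Inserting the two $L^2$-type bounds just obtained, and using $n\ge 1$ so that $n^{-3/4}\le n^{-1/2}$,
\begin{equation*}
    \|f\|_{L^4}^2\le C\left(\frac{1}{n^{1/2}}\|\partial_x u\|_{L^2}\right)^{3/2}\|\partial_x u\|_{L^2}^{1/2}=\frac{C}{n^{3/4}}\|\partial_x u\|_{L^2}^2\le\frac{C}{n^{1/2}}\|\partial_x u\|_{L^2}^2,
\end{equation*}
which is the second inequality (in fact with a better power of $n$).

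I expect the only delicate point to be the justification of the functional-calculus identities on a possibly unbounded interval $I$, together with the form-domain identity $\|\partial_x u\|_{L^2}^2=\int_0^\infty\lam\,d\mu_u$ valid for every $u\in H_0^1(I)$; these rely on $A=-\partial_x^2$ being self-adjoint and non-negative, as recorded before Lemma \ref{lemma_2_1}. Once the spectral representation is in place, the rest is the pair of one-line multiplier bounds and a standard Gagliardo--Nirenberg interpolation, so the computational content is routine.
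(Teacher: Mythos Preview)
Your argument is correct. Note that the paper itself does not give a proof of this lemma: it is quoted from \cite{MR1}, so there is no in-text proof to compare against directly. The spectral-calculus route you take---writing $J_m-J_n$ as the multiplier $g(\lam)=(m-n)\lam/((m+\lam)(n+\lam))$ for $A=-\partial_x^2$, bounding $g(\lam)^2\le\lam/n$ and $g(\lam)^2\le 1$, and then interpolating via the one-dimensional Gagliardo--Nirenberg inequality---is exactly the standard way to obtain such resolvent-difference bounds and is in line with how \cite{MR1} proceeds. Your observation that the $L^4$ bound actually comes with the sharper factor $n^{-3/4}$ is correct and harmless for the rest of the paper, where only the cruder $n^{-1/2}$ decay is used.
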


Now, we define an approximate energy as
\begin{equation}
    E_n(U):= \|\partial_x u\|_{L^2}+\frac{1}{2}(\|v\|_{L^2}+\|w\|_{L^2})+(J_n v, |J_n u|^2)_{L^2}.
\end{equation}
A standard calculation shows the conservation laws of mass and energy for the approximate problem.
\begin{lemma}
    For all $t\in[-T_n,T_n]$,

    \vspace{1mm}
    \noindent (i) $M(u_n(t))=M(u_n(0))$

    \vspace{1mm}
    \noindent (ii) $E_n(U_n(t))=E_n(U_n(0))$.
\end{lemma}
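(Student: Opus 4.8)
The plan is to verify both identities by differentiating the quantities along the flow of \eqref{eq:(2.1)} and showing that, after inserting the equations, every resulting term either vanishes by skew-symmetry or cancels against a partner. On the existence interval $[-T_n,T_n]$ the approximate solution is regular enough in space (each of $u_n,v_n,w_n$ lies in $(H^2\cap H_0^1)(I)$ or $H_0^1(I)$) and of class $C^1$ in time to justify differentiating under the integral and integrating by parts; since $u_n,v_n,w_n$ satisfy the homogeneous Dirichlet condition for every $t$, all boundary terms arising from integration by parts vanish. The only properties of $J_n$ I will use are that it is real, self-adjoint on $L^2(I)$ and maps $L^2(I)$ into $(H^2\cap H_0^1)(I)$ (Lemma \ref{lemma_2_1}(1),(2)); in particular I will \emph{not} need $J_n$ to commute with $\partial_x$.

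For the mass, differentiating gives $\tfrac{d}{dt}M(u_n)=2\,\mathrm{Re}\int_I\partial_t u_n\,\overline{u_n}\,dx$. Inserting the first equation of \eqref{eq:(2.1)}, the dispersive contribution is $2\,\mathrm{Re}(ic)$ with $c=-\|\partial_x u_n\|_{L^2}^2\in\R$ after one integration by parts, hence is $0$; for the nonlinear contribution I transfer one $J_n$ onto $\overline{u_n}$ by self-adjointness, obtaining $-\mathrm{Re}\bigl(i\int_I J_n v_n\,|J_n u_n|^2\,dx\bigr)$, which again vanishes because $J_n v_n$ is real and so the integral is real. Thus $\tfrac{d}{dt}M(u_n)=0$ on $[-T_n,T_n]$.

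For the energy I would differentiate the four contributions to $E_n$ separately and group the outcome into cancelling blocks. Writing $\phi=J_n u_n$ and $\psi=J_n v_n$ (real), the dispersive part of $\tfrac{d}{dt}\|\partial_x u_n\|_{L^2}^2$ is $-2\,\mathrm{Re}\bigl(i\|\partial_x^2 u_n\|_{L^2}^2\bigr)=0$, and the fluxes coming from $\tfrac{d}{dt}\tfrac12(\|v_n\|_{L^2}^2+\|w_n\|_{L^2}^2)$ add up to $-\int_I\partial_x(v_n w_n)\,dx=0$. The time derivative of the coupling term $(\psi,|\phi|^2)_{L^2}$ then produces exactly three remaining pieces. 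The first, $2\int_I\psi\,\mathrm{Re}(iJ_n\partial_x^2 u_n\,\overline{\phi})\,dx$, cancels the nonlinear part of $\tfrac{d}{dt}\|\partial_x u_n\|_{L^2}^2$ via the pointwise identity $\mathrm{Re}(ia\overline{b})+\mathrm{Re}(ib\overline{a})=0$ with $a=\phi$, $b=J_n\partial_x^2 u_n$. The second, $-\int_I J_n(\partial_x w_n)\,|\phi|^2\,dx$ (from $\partial_t\psi=-J_n\partial_x w_n$), cancels the surviving term $-\int_I\partial_x\bigl(J_n(|\phi|^2)\bigr)w_n\,dx$ of $\tfrac{d}{dt}\tfrac12\|w_n\|_{L^2}^2$: integrating the latter by parts (no boundary term since $w_n\in H_0^1$) and using self-adjointness of $J_n$ turns it into $+\int_I|\phi|^2\,J_n(\partial_x w_n)\,dx$. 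The third is the self-interaction term $-2\,\mathrm{Re}\bigl(i\int_I\psi\,\overline{\phi}\,J_n^2(\psi\phi)\,dx\bigr)$; by reality of $\psi$ and self-adjointness of $J_n^2$ one checks that $\int_I\psi\,\overline{\phi}\,J_n^2(\psi\phi)\,dx$ equals its own complex conjugate, hence is real, so this term vanishes as well. Collecting the blocks yields $\tfrac{d}{dt}E_n(U_n)=0$.

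The computation is routine once it is organized this way; the content lies in the bookkeeping, and the point where care is required is checking that the cancellations genuinely close. They do so precisely because of how the regularizations are placed: the factor $J_n(J_n v_n\cdot J_n u_n)$ in the Schr\"odinger equation, the symmetric factor $\partial_x J_n(|J_n u_n|^2)$ in the equation for $w_n$, and the matching weights $J_n v_n,\ |J_n u_n|^2$ in the coupling energy are exactly what is needed for the self-adjointness arguments---above all for the reality of $\int_I\psi\,\overline{\phi}\,J_n^2(\psi\phi)\,dx$---to go through. I would therefore present the identity $\tfrac{d}{dt}E_n(U_n)=0$ by exhibiting these cancelling blocks rather than expanding all terms simultaneously.
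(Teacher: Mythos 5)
Your proof is correct and is exactly the ``standard calculation'' the paper invokes without writing out: the paper gives no proof of this lemma beyond that remark, and your differentiation-plus-cancellation argument (skew-symmetry of $i\partial_x^2$, self-adjointness and reality of $J_n$, Dirichlet boundary conditions killing the boundary terms, and the pairing of the coupling term with the nonlinear pieces of the kinetic and wave parts) is the intended one. Your observation that no commutation of $J_n$ with $\partial_x$ is needed—only self-adjointness—is a nice clarification of why the regularization is placed as it is in \eqref{eq:(2.1)}.
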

We set $U_n(t)=(u_n(t),v_n(t),w_n(t))$ so that \eqref{eq:(2.1)} becomes
\begin{equation}
    \partial_t U_n(t) = AU_n(t) + G_n(U_n(t))\label{eq:(2.2)},
\end{equation}
where
\begin{equation}
A=
\begin{pmatrix*}[c]
    i\partial_x^2 &0 &0 \\
    0 &0 &-\partial_x \\
    0 &-\partial_x &0
\end{pmatrix*},
\quad
    G_n(U)=
    \begin{pmatrix*}[c]
        -iJ_n(J_n v_n\cdot J_n u_n) \\
        0 \\
        -\partial_x J_n(|J_n u_n|^2)
    \end{pmatrix*}.
\end{equation}

Since $A$ is a skew-adjoint operator in $X=H_0^1(I)\times L^2(I)\times L^2(I)$ with the dense domain $D(A)=(H^3\cap H_0^1)(I)\times H_0^1(I)\times H_0^1(I)$, and $G_n$ is Lipschitz continuous on bounded subsets of $X$ and $U_n(0)\in D(A)$, there exists $T_n>0$ and the unique local solution $U_n=(u_n,v_n,w_n)\in C([-T_n,T_n];D(A))\cap C^1([-T_n,T_n];X)$ to the initial value problem \eqref{eq:(2.2)}.

For these approximate solutions, we obtain the following uniform estimate.

\begin{lemma}
There exists $M_1>0$, which depends on $\|u_0\|_{H^1},\|v_0\|_{L^2},\|w_0\|_{L^2}$, such that
    \begin{equation}
        \sup_{n\in\N,t\in\R}(\|u_n(t)\|_{H^1}+\|v_n(t)\|_{L^2}+\|w_n(t)\|_{L^2})\le M_{1}.
    \end{equation}
\end{lemma}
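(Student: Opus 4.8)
The plan is to combine the two conservation laws of the previous lemma to produce an a priori bound independent of both $n$ and $t$. By conservation of mass and Lemma \ref{lemma_2_1}(3), $\|u_n(t)\|_{L^2}=\|J_n u_0\|_{L^2}\le\|u_0\|_{L^2}$ for every $t$ and $n$, so the $L^2$-norm of $u_n$ is already controlled; it remains to bound $\|\partial_x u_n\|_{L^2}$, $\|v_n\|_{L^2}$ and $\|w_n\|_{L^2}$. For this I would rewrite the conserved approximate energy as
\begin{equation}
    \|\partial_x u_n(t)\|_{L^2}^2 + \tfrac12\big(\|v_n(t)\|_{L^2}^2 + \|w_n(t)\|_{L^2}^2\big) = E_n(U_n(0)) - \big(J_n v_n(t), |J_n u_n(t)|^2\big)_{L^2}.
\end{equation}
The left-hand side is exactly the quantity I want to control, and the whole difficulty is concentrated in the sign-indefinite coupling term on the right: the approximate energy is not coercive by itself, so I must show this term can be absorbed into the left-hand side up to a constant depending only on the data.

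The key step is a subcritical interpolation estimate. Writing, via Cauchy--Schwarz and Lemma \ref{lemma_2_1}(3),
\begin{equation}
    \big|\big(J_n v_n, |J_n u_n|^2\big)_{L^2}\big| \le \|v_n\|_{L^2}\,\|J_n u_n\|_{L^4}^2,
\end{equation}
I would apply the one-dimensional Gagliardo--Nirenberg inequality $\|f\|_{L^4}^2 \le C\|f\|_{L^2}^{3/2}\|\partial_x f\|_{L^2}^{1/2}$ to $f=J_n u_n\in H_0^1(I)$, together with $\|J_n u_n\|_{L^2}\le\|u_0\|_{L^2}$ and $\|\partial_x J_n u_n\|_{L^2}\le\|\partial_x u_n\|_{L^2}$ (since $J_n$ is a self-adjoint $L^2$-contraction commuting with $\partial_x$), to obtain
\begin{equation}
    \big|\big(J_n v_n, |J_n u_n|^2\big)_{L^2}\big| \le C\,\|u_0\|_{L^2}^{3/2}\,\|v_n\|_{L^2}\,\|\partial_x u_n\|_{L^2}^{1/2}.
\end{equation}
Applying Young's inequality twice --- first to split off $\|v_n\|_{L^2}$, then to split off $\|\partial_x u_n\|_{L^2}^{1/2}$ --- bounds the right-hand side by $\tfrac14\|v_n\|_{L^2}^2+\tfrac12\|\partial_x u_n\|_{L^2}^2+C\|u_0\|_{L^2}^6$, which is absorbed into the left-hand side of the energy identity. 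This yields $\|\partial_x u_n(t)\|_{L^2}^2+\|v_n(t)\|_{L^2}^2+\|w_n(t)\|_{L^2}^2\le C\big(E_n(U_n(0))+\|u_0\|_{L^2}^6\big)$. It then remains to bound $E_n(U_n(0))$ uniformly in $n$: using $\|J_n\|_{\mathcal L(L^2)}\le 1$, $\|J_n\|_{\mathcal L(H_0^1)}\le 1$ and the same Gagliardo--Nirenberg estimate on its coupling term, each summand is controlled by $\|u_0\|_{H^1}$, $\|v_0\|_{L^2}$ and $\|w_0\|_{L^2}$, independently of $n$.

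Collecting the mass bound and the energy bound gives a constant $M_1=M_1(\|u_0\|_{H^1},\|v_0\|_{L^2},\|w_0\|_{L^2})$, independent of $t$ and $n$, controlling $\|u_n(t)\|_{H^1}+\|v_n(t)\|_{L^2}+\|w_n(t)\|_{L^2}$ on the existence interval $[-T_n,T_n]$. Since the blow-up alternative for \eqref{eq:(2.2)} is governed by the $X=H_0^1\times L^2\times L^2$ norm, this a priori bound forces $T_n=+\infty$, so the estimate in fact holds for all $t\in\R$, as claimed. The main obstacle is precisely the indefinite coupling term: the argument succeeds because the nonlinearity is energy-subcritical in one space dimension, which is exactly what makes the Gagliardo--Nirenberg/Young absorption work with no smallness assumption on the data, in contrast to the two-dimensional case where the corresponding term is critical.
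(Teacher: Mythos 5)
Your proposal is correct and takes essentially the same route as the paper: mass conservation plus conservation of $E_n$, with the indefinite coupling term $(J_n v_n,|J_n u_n|^2)_{L^2}$ absorbed via Cauchy--Schwarz, the one-dimensional Gagliardo--Nirenberg inequality and Young's inequality, a uniform-in-$n$ bound on $E_n(U_n(0))$, and the a priori bound then yielding global extension. One small caveat: your parenthetical justification that $J_n$ ``commutes with $\partial_x$'' is not literally true under Dirichlet boundary conditions (e.g.\ $J_n\partial_x u$ vanishes on $\partial I$ while $\partial_x J_n u$ need not), but the inequality you actually use, $\|\partial_x J_n u\|_{L^2}\le\|\partial_x u\|_{L^2}$ for $u\in H_0^1(I)$, is exactly Lemma \ref{lemma_2_1}(3) with $X=H_0^1(I)$, so the argument stands.
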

\begin{proof}
    Since the inner product term of the approximate energy $E_n$ is estimated as
    \begin{align*}
        |(J_n v_n,|J_n u_n|^2)_{L^2}|
        &\le \|J_n v_n\|_{L^2}\|J_n u_n\|_{L^4}^2\\
        &\le C\|v_n\|_{L^2}\|u_n\|_{L^2}^{3/2}\|\partial_x u_n\|_{L^2}^{1/2}\\
        &\le (C\|u_0\|_{L^2}^3+\frac{1}{2}\|\partial_x u_n\|_{L^2})\|v_n\|_{L^2}\\
        &\le \frac{1}{2}\|\partial_x u_n\|_{L^2}^2+\frac{1}{4}\|v_n\|_{L^2}^2+C(\|u_0\|_{L^2}),
    \end{align*}
    we obtain
    \begin{align*}
        &\frac{1}{2}\|\partial_x  u_n\|_{L^2}^2+\frac{1}{4}\|v_n\|_{L^2}^2+\frac{1}{2}\|w_n\|_{L^2}^2-C(\|u_0\|_{L^2})\\
        &\le E_n(U_n(t))=E_n(U_n(0))\\
        &\le \|\partial_x u_0\|_{L^2}^2+\frac{1}{2}(\|v_0\|_{L^2}^2+\|w_0\|_{L^2}^2)+C\|v_0\|_{L^2}\|u_0\|_{L^2}^{2/3}\|\partial_x u_0\|_{L^2}^{1/2}
    \end{align*}
    by using Lemma $2.3$ and
    \begin{align*}
        |(J_{n}^{2}v_0,|J_{n}^{2}u_0|^2)_{L^2}|
        &\le \|J_n v_0\|_{L^2}\|J_n u_0\|_{L^4}^2\\
        &\le C\|v_0\|_{L^2}\|u_0\|_{L^2}^{3/2}\|\partial_x u_0\|_{L^2}^{1/2}.
    \end{align*}
    Therefore, we have
    \begin{align*}
        &\sup_{n\in\N,t\in\R}(\|u_n(t)\|_{L^2}^2+\|\partial_x u_n(t)\|_{L^2}^2+\|v_n(t)\|_{L^2}^2+\|w_n(t)\|_{L^2}^2)\\
        &\le \|\partial_x u_0\|_{L^2}^2+\frac{1}{2}(\|v_0\|_{L^2}^2+\|w_0\|_{L^2}^2)+C\|v_0\|_{L^2}\|u_0\|_{L^2}^{2/3}\|\partial_x u_0\|_{L^2}^{1/2}=:M_1^2.
    \end{align*}
\end{proof}

Lemma $2.4$ implies that the energy norm is estimated a priori, then the local solutions for \eqref{eq:(2.1)} can be extended globally in time.

\section{Uniform boundedness of the sequence of approximate solutions}
In this section, we establish the uniform $H^2\times H^1\times H^1$ estimates of the sequence of solutions for the regularized system.

\begin{proposition}
\label{prop_3_1}
For all $t\in\R$,
    \begin{equation}
        \sup_{n\in\N}\hspace{1mm}(\|u_n(t)\|_{H^2}+\|v_n(t)\|_{H^1}+\|w_n(t)\|_{H^1})\le C(M_1)(1+|t|^2).
    \end{equation}
\end{proposition}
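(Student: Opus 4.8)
The plan is to bound the three norms by reducing everything to two scalar quantities and then closing a \emph{triangular} system of differential inequalities. First I would reduce the $H^2$-control of $u_n$ to an $L^2$-bound on $\partial_t u_n$: the first equation gives $\partial_x^2 u_n=-i\,\partial_t u_n+J_n(J_n v_n\cdot J_n u_n)$, and since $H^1(I)\hookrightarrow L^\infty(I)$ in one dimension, Lemma~2.1 together with the $H^1\times L^2\times L^2$ bound $M_1$ yields $\|J_n(J_n v_n\cdot J_n u_n)\|_{L^2}\le CM_1^2$; hence $\|u_n\|_{H^2}\le\|\partial_t u_n\|_{L^2}+C(M_1)$. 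Similarly, from $\partial_t w_n=-\partial_x v_n-\partial_x J_n(|J_n u_n|^2)$ and $\partial_x w_n=-\partial_t v_n$, together with $\|\partial_x J_n(|J_n u_n|^2)\|_{L^2}\le CM_1^2$, the $H^1$-norms of $v_n,w_n$ are controlled by the wave energy $\mathcal{E}_n:=\tfrac12(\|\partial_x v_n\|_{L^2}^2+\|\partial_x w_n\|_{L^2}^2)$ up to $C(M_1)$. Thus it suffices to control $P_n(t):=\|\partial_t u_n(t)\|_{L^2}$ and $\mathcal{E}_n(t)$.

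Next I would estimate $P_n$ by differentiating the first equation in $t$ and testing against $\partial_t u_n$. Because $u_n=0$ on $\partial I$ for all $t$ forces $\partial_t u_n=0$ there, the linear part $i\partial_x^2$ is skew and contributes nothing; the ``diagonal'' nonlinear term $-iJ_n(J_n v_n\cdot J_n\partial_t u_n)$ also drops out, since $\mathrm{Re}\,i\int J_n v_n\,|J_n\partial_t u_n|^2=0$. The only surviving contribution is the one in which the time derivative falls on $v_n$, and using $\partial_t v_n=-\partial_x w_n$ and $H^1\hookrightarrow L^\infty$ one obtains $\tfrac{d}{dt}P_n\le CM_1\,\|\partial_x w_n\|_{L^2}\le C(M_1)\sqrt{\mathcal{E}_n}$.

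The wave energy is where the difficulty lies. Via the energy identity for the second-order (wave) formulation of the $(v_n,w_n)$ system one gets $\tfrac{d}{dt}\mathcal{E}_n=(\partial_t v_n,\partial_x^2 F_n)$ with $F_n=J_n(|J_n u_n|^2)$, the boundary terms from integration by parts vanishing because $v_n\in H_0^1(I)$ for all $t$ forces $\partial_t v_n=0$ on $\partial I$. Expanding $\partial_x^2|J_n u_n|^2$ and substituting $\partial_x^2 J_n u_n=-iJ_n\partial_t u_n+J_n^2(J_n v_n\,J_n u_n)$ shows that every contribution is bounded by $C(M_1)$ except the term carrying $\partial_t u_n$, which is only $\le C(M_1)\sqrt{\mathcal{E}_n}\,(1+P_n)$. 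Closing the two inequalities naively then couples $P_n$ and $\mathcal{E}_n$ bidirectionally and yields merely exponential growth (as in the two-dimensional result). To recover the quadratic bound I would introduce a modified wave energy $\widetilde{\mathcal{E}}_n:=\mathcal{E}_n+R_n$, where the correction $R_n$ is a bilinear quantity in $(v_n,w_n)$ and $J_n u_n$ designed so that $\tfrac{d}{dt}R_n$ exactly cancels the $\partial_t u_n$-term; concretely, the offending factor is $\mathrm{Im}(\overline{J_n u_n}\,\partial_t J_n u_n)$, which is a perfect time derivative modulo lower-order terms, so integrating it by parts in time absorbs it into $R_n$. The aim is to arrange $\tfrac{d}{dt}\widetilde{\mathcal{E}}_n\le C(M_1)$, i.e.\ to remove the $P_n$-feedback into the wave energy entirely.

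The main obstacle is precisely this construction of $R_n$: one must verify that the correction is itself bounded by $C(M_1)(1+\sqrt{\mathcal{E}_n})$ (so that $\widetilde{\mathcal{E}}_n$ and $\mathcal{E}_n$ are comparable) and that all the new terms produced by $\tfrac{d}{dt}R_n$—after again substituting the evolution equations for $\partial_t u_n,\partial_t v_n,\partial_t w_n$ and integrating by parts—are controlled by $C(M_1)$ uniformly in $n$, which requires careful use of the Dirichlet boundary conditions and of the contraction and commutation properties of $J_n$ in Lemma~2.1. Once this is done the system is triangular: $\tfrac{d}{dt}\widetilde{\mathcal{E}}_n\le C(M_1)$ gives $\sqrt{\mathcal{E}_n(t)}\le C(M_1)(1+|t|)$, and feeding this into $\tfrac{d}{dt}P_n\le C(M_1)\sqrt{\mathcal{E}_n}$ gives $P_n(t)\le C(M_1)(1+|t|^2)$. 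Combining with the two reductions of the first paragraph yields $\|u_n(t)\|_{H^2}+\|v_n(t)\|_{H^1}+\|w_n(t)\|_{H^1}\le C(M_1)(1+|t|^2)$ with all constants depending only on $M_1$, hence uniform in $n$, as claimed.
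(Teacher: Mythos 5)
Your first two reductions are sound and in fact coincide with what the paper does: controlling $\|u_n\|_{H^2}$ through $\|\partial_t u_n\|_{L^2}$ via the equation, controlling $\|v_n\|_{H^1},\|w_n\|_{H^1}$ through the wave energy $\mathcal{E}_n$, and the estimate $\tfrac{d}{dt}\|\partial_t u_n\|_{L^2}\le C(M_1)\sqrt{\mathcal{E}_n}$ are all correct. You have also correctly located the difficulty: the term in $\tfrac{d}{dt}\mathcal{E}_n$ carrying $\partial_t u_n$, namely $2\bigl(J_n\partial_t v_n,\mathrm{Im}(\overline{J_n u_n}\,J_n\partial_t u_n)\bigr)_{L^2}$, which naively re-couples the system and gives only exponential growth. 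But the step on which your whole argument rests --- the construction of a correction $R_n$, comparable to $1+\sqrt{\mathcal{E}_n}$, with $\tfrac{d}{dt}(\mathcal{E}_n+R_n)\le C(M_1)$ --- is not carried out, and the mechanism you propose for it fails. It is $\mathrm{Re}(\overline{J_n u_n}\,\partial_t J_n u_n)=\tfrac12\partial_t|J_n u_n|^2$ that is a perfect time derivative; the \emph{imaginary} part is not. If you substitute the equation into it you get
\begin{equation}
\mathrm{Im}(\overline{J_n u_n}\,J_n\partial_t u_n)=\tfrac12\partial_x^2|J_n u_n|^2-|\partial_x J_n u_n|^2-\mathrm{Re}\bigl(\overline{J_n u_n}\,J_n^2(J_n v_n\,J_n u_n)\bigr),
\end{equation}
which merely reverses the substitution that produced the offending term --- you go in a circle. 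If instead you integrate by parts in time (put $R_n=2(J_n v_n,\mathrm{Im}(\overline{J_n u_n}J_n\partial_t u_n))_{L^2}$), then $\tfrac{d}{dt}R_n$ contains $\partial_t^2 u_n$, and eliminating it via the equation forces two spatial derivatives onto $J_n v_n$, i.e.\ a term with $\partial_x^2 v_n$, which is not uniformly bounded in $n$. A further warning sign: your claimed differential inequality $\tfrac{d}{dt}\widetilde{\mathcal{E}}_n\le C(M_1)$ would give $\|v_n\|_{H^1}+\|w_n\|_{H^1}\lesssim (1+|t|)^{1/2}$, strictly stronger than what the paper (or anything cited in it) proves; that should make you doubt the decoupling is achievable.

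The paper resolves the coupling differently, and the fix is instructive: the correction that cancels the offending term exactly is $\|\partial_t u_n\|_{L^2}^2$ itself. With $F_n=\|\partial_t u_n\|_{L^2}^2+\tfrac12(\|\partial_x v_n\|_{L^2}^2+\|\partial_x w_n\|_{L^2}^2)$, the $\partial_t u_n$-contributions from the Schr\"odinger part and the wave part cancel identically, leaving
\begin{equation}
\frac{d}{dt}F_n=2\bigl(\partial_t J_n v_n,\ |\partial_x J_n u_n|^2+\mathrm{Re}(J_n\overline{u_n}\,J_n^2(J_n v_n\cdot J_n u_n))\bigr)_{L^2}.
\end{equation}
The system is then still coupled, but the coupling is \emph{sub-linear}: by Gagliardo--Nirenberg, $\|\partial_x J_n u_n\|_{L^4}^2\le C\|\partial_x u_n\|_{L^2}^{3/2}\|\partial_x^2 u_n\|_{L^2}^{1/2}\le C(M_1)(F_n+1)^{1/4}$, while $\|\partial_t v_n\|_{L^2}=\|\partial_x w_n\|_{L^2}\le (2F_n)^{1/2}$, so $|\tfrac{d}{dt}F_n|\le C(M_1)(F_n+1)^{3/4}$. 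Integrating this Riccati-type inequality gives $F_n(t)\le C(M_1)(F_n(0)+|t|^4)$, hence all three norms are $\le C(1+|t|^2)$. So the quadratic growth comes not from triangularizing the system but from the exponent $3/4<1$; your outline cannot be completed as stated without supplying exactly this (or an equivalent) ingredient.
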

\begin{proof}
    We introduce the higher-order energy $F_n$ to the approximate problem \eqref{eq:(2.1)} defined by
    \begin{equation}
        F_n(t)=\|\partial_t u_n(t)\|_{L^2}^2+\frac{1}{2}(\|\partial_x v_n(t)\|_{L^2}^2+\|\partial_x w_n(t)\|_{L^2}^2).
    \end{equation}
    Calculating the time derivative of $F_n$,
    \begin{align}
        \frac{d}{dt}F_n(t)
        &= 2\langle \partial_t^2 u_n, \partial_t u_n\rangle +\langle\partial_t \partial_x v_n, \partial_x v_n\rangle+\langle\partial_t \partial_x w_n, \partial_x w_n\rangle \label{eq:3.1}\\
        &= 2\langle \partial_t u_n(i\partial_x^2 u_n-iJ_n(J_n v_n\cdot J_n u_n)), \partial_t u_n\rangle +\langle-\partial_x w_n,\partial_x^2 v_n\rangle\\
        &\quad +\langle\partial_x w_n,\partial_x^2 v_n+J_n \partial_x^2 |J_n u_n|^2\rangle\\
        &=-2(i\partial_t (J_n v_n\cdot J_n u_n),J_n\partial_t u_n)_{L^2}+(J_n\partial_x^2 |J_n u_n|^2,\partial_t v_n)_{L^2}\\
        &=-2(i\partial_t J_n v_n\cdot J_n u_n,J_n\partial_t u_n)_{L^2}+(J_n\partial_t v_n,\partial_x^2 |J_n u_n|^2)_{L^2}\\
        &=-2(i\partial_t J_n v_n\cdot J_n u_n,J_n(i\partial_x^2 u_n-iJ_n(J_n v_n\cdot J_n u_n)))_{L^2}\\
        &\quad +(J_n\partial_t v_n,\partial_x^2 |J_n u_n|^2)_{L^2}\\
        &=2(\partial_t J_n v_n\cdot J_n u_n,-\partial_x^2 J_n u_n+J_{n}^{2}(J_n v_n\cdot J_n u_n))_{L^2}\\
        &\quad +(J_n\partial_t v_n,\partial_x^2 |J_n u_n|^2)_{L^2}\\
        &=(J_n\partial_t v_n,\partial_x^2 |J_n u_n|^2-2\mathrm{Re}(J_n \overline{u_n}\partial_x^2 J_n u_n))_{L^2}\\
        &\quad +2(\partial_t J_n v_n\cdot J_n u_n,J_{n}^{2}(J_n v_n\cdot J_n u_n))_{L^2}\\
        &=2(\partial_t J_n v_n,|\partial_x J_n u_n|^2+\mathrm{Re}(J_n \overline{u_n}J_{n}^2(J_n v_n\cdot J_n u_n)))_{L^2}.
    \end{align}
    where in the last equality we used
    \begin{equation}
        \partial_x^2|J_n u_n|^2-2\mathrm{Re}(J_n \overline{u_n}J_{n}^2(J_n v_n \cdot J_n u_n))=2|\partial_x J_n u_n|^2.
    \end{equation}
    We note that
    \begin{align}
        \|\partial_t u_n(t)\|_{L^2}^2
        &=\|\partial_x^2 u_n-J_n(J_n v_n\cdot J_n u_n)\|_{L^2}^2 \label{eq:3.2}\\
        &=\|\partial_x^2 u_n\|_{L^2}^2+(\partial_x J_n v_n,\partial_x |J_n u_n|^2)_{L^2}+2(J_n v_n,|\partial_x J_n u_n|^2)_{L^2}\\
        &\quad +\|J_n(J_n v_n\cdot J_n u_n)\|_{L^2}^2
    \end{align}
    and the inner product terms in \eqref{eq:3.2} are estimated as
    \begin{align}
        |(\partial_x J_n v_n,\partial_x (|J_n u_n|^2))|
        &\le 2\|\partial_x J_n v_n\|_{L^2}\|\partial_x J_n u_n J_n \overline{u_n}\|_{L^2}\label{eq:3.3}\\
        &\le 2\|\partial_x J_n v_n\|_{L^2}\|\partial_x J_n u_n\|_{L^4}\|J_n u_n\|_{L^4}\\
        &\le C\|\partial_x v_n\|_{L^2}\|u_n\|_{L^2}^{3/4}\|\partial_x u_n\|_{L^2}^{1/4}\|\partial_x u_n\|_{L^2}^{3/4}\|\partial_x^2 u_n\|_{L^2}^{1/4}\\
        &\le C(M_1)\|\partial_x v_n\|_{L^2}\|\partial_x^2 u_n\|_{L^2}^{3/4}\\
        &\le \left(\frac{1}{2}\|\partial_x^2 u_n\|_{L^2}+C(M_1)\right)\|\partial_x v_n\|_{L^2}\\
        &\le \frac{3}{8}\|\partial_x^2 u_n\|_{L^2}^2+\frac{1}{3}\|\partial_x v_n\|_{L^2}^2+C(M_1), 
    \end{align}
    Similarly, we have
    \begin{align}
        |(J_n v_n,|\partial_x J_n u_n|^2)_{L^2}|
        &\le \frac{1}{16}\|\partial_x^2 u_n\|_{L^2}^2+C(M_1)\label{eq:3.4}.
    \end{align}
    By \eqref{eq:3.3} and \eqref{eq:3.4}, we have
    \begin{align}
        \frac{1}{2}\|\partial_x^2 u_n\|_{L^2}^2
        &+\frac{1}{6}\|\partial_x v_n\|_{L^2}^2+\frac{1}{2}\|\partial_x w_n\|_{L^2}^2\label{eq:3.5}\\
        &\le F_n(t)+C(M_1)\\
        &\le \frac{3}{2}\|\partial_x^2 u_n\|_{L^2}^2+\frac{4}{3}\|\partial_x v_n\|_{L^2}^2+\frac{1}{2}\|\partial_x w_n\|_{L^2}^2+C(M_1).
    \end{align}
    From \eqref{eq:3.1} and \eqref{eq:3.5}, we obtain
    \begin{align}
        \left|\frac{d}{dt}F_n(t)\right| &\le 2\|\partial_t J_n v_n\|_{L^2}(\|\partial_x J_n u_n\|_{L^4}^2+\|J_n u_n\|_{L^{\infty}}\|J_n^2(J_n v_n \cdot J_n u_n)\|_{L^2})\label{eq:3.6}\\
        &\le 2\|\partial_t v_n\|_{L^2}(\|\partial_x u_n\|_{L^4}^2+\|J_n u_n\|_{L^{\infty}}^2\|J_n v_n\|_{L^2})\\
        &\le C\|\partial_t v_n\|_{L^2}(\|\partial_x u_n\|_{L^2}^{3/2}\|\partial_x^2 u_n\|_{L^2}^{1/2}+\|u_n\|_{H^1}^2\|v_n\|_{L^2})\\
        &\le C(M_1)\|\partial_t v_n\|_{L^2}(\|\partial_x^2 u_n\|_{L^2}^{1/2}+1)\\
        &\le C(M_1)(\|\partial_x^2 u_n\|_{L^2}^2+\|\partial_x w_n\|_{L^2}^2)^{3/4}+\|\partial_x w_n\|_{L^2}^{3/2}+C(M_1)\\
        &\le C(M_1)(F_n(t)+1)^{3/4}.
    \end{align}
    Therefore, by \eqref{eq:3.5} and \eqref{eq:3.6}, there exists $C_0=C_0(\|(u_0,v_0,w_0)\|_{H^2\times H^1\times H^1})>0$ such that
    \begin{align}
        F_n(t)&\le C(M_1)(F_n(0)+|t|^4)\\
        &\le C_0(1+|t|^4)=:M_2(t)^2.
    \end{align}
    Thus, we obtain the result.
\end{proof}

\section{Convergence of the sequence of approximate solutions}
Let $T>0$ be arbitrary. In previous section, we have
\begin{equation}
    \sup_{n\in\mathbb{N},t\in[-T,T]}\|(u_n(t),v_n(t),w_n(t))\|_{H^2\times H^1\times H^1}\le C_0(1+|T|^2)=:M_2.
\end{equation}

In this section, we prove the following proposition.

\begin{proposition}
    The sequence $((u_n,v_n,w_n))_{n\in\mathbb{N}}$ of the solutions to the approximate system \eqref{eq:(2.1)} is a Cauchy sequence of $C([-T,T];H_0^1(I)\times L^2(I)\times L^2(I))$.\label{prop_4_1}
\end{proposition}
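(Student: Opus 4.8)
The plan is to show that the differences $u_m - u_n$, $v_m - v_n$, $w_m - w_n$ are controlled in the lower-order norm $H_0^1 \times L^2 \times L^2$ by a quantity that tends to $0$ as $m, n \to \infty$. The natural tool, suggested by the title of Section~4 and the phrase "modified energy," is to write the equations satisfied by the differences and to estimate a suitable energy functional built from them. Crucially, since we only have uniform bounds at the $H^2 \times H^1 \times H^1$ level (Proposition~\ref{prop_3_1}), we cannot hope to close a difference estimate at that top regularity, so we work one derivative lower: we seek a Cauchy estimate in $H_0^1 \times L^2 \times L^2$, exploiting the top-order uniform bounds $M_2$ only as fixed constants in the nonlinear terms.

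**Key steps.** First I would set $(p, q, r) := (u_m - u_n, v_m - v_n, w_m - w_n)$ and subtract the two copies of \eqref{eq:(2.1)} to obtain the system governing $(p,q,r)$. The right-hand sides will split into two kinds of terms: genuinely multilinear differences (e.g. $J_m(J_m v_m \cdot J_m u_m) - J_n(J_n v_n \cdot J_n u_n)$), which I would telescope by adding and subtracting intermediate products so that each resulting piece carries one factor of $p$, $q$, or $r$; and "regularizer-mismatch" terms of the form $(J_m - J_n)(\cdots)$, which do not contain a factor of the difference and must instead be controlled by Lemma~\ref{lemma_2_2}. Second, I would define the modified energy for the difference, modeled on $E$ but with the inner-product term adjusted so that the worst derivative-losing contributions from $\partial_x(|u|^2)$-type terms cancel upon differentiation in time, exactly as the approximate energy $E_n$ builds in the $J_n$-weighted coupling. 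Third, I would differentiate this modified energy in $t$, using the equations for $(p,q,r)$ and integrating by parts to remove the dangerous $\partial_x^2$ and $\partial_x$ factors; the uniform bounds $M_1$ and $M_2$ together with the embedding $H^1 \hookrightarrow L^\infty$ (valid in one dimension) reduce every nonlinear term either to a constant multiple of the energy itself or to an $O(n^{-1/2})$ remainder coming from Lemma~\ref{lemma_2_2}. This yields a differential inequality of the form
\begin{equation}
    \frac{d}{dt}\mathcal{E}(t) \le C(M_2)\,\mathcal{E}(t) + C(M_2)\,n^{-1/2},
\end{equation}
whence Gronwall's inequality gives $\mathcal{E}(t) \le C(M_2, T)\,n^{-1/2}$ on $[-T,T]$, since the initial data difference $\mathcal{E}(0)$ is itself $O(n^{-1/2})$ by Lemma~\ref{lemma_2_2} applied to $(J_m - J_n)u_0$ etc. Equivalence of $\mathcal{E}$ with the $H_0^1 \times L^2 \times L^2$ norm of the difference (after the modification) then delivers the Cauchy property uniformly in $t \in [-T,T]$.

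**Main obstacle.** The delicate point is the handling of the coupling term coming from $\partial_x(|u_m|^2) - \partial_x(|u_n|^2)$ in the equation for $r$, paired against $q$ (or its analog) in the energy differentiation. Naively this loses a derivative on $u$, and the only reason it can be absorbed is the cancellation engineered by the modified energy's cross term, combined with the fact that the top-order norms of $u_m, u_n$ are uniformly bounded by $M_2$. Getting the modification right so that this cancellation is exact -- rather than merely approximate up to a controllable error -- is where the real work lies; the regularizers $J_m, J_n$ complicate the cancellation because they do not commute cleanly across the difference, forcing the extra $(J_m - J_n)$ remainders to be peeled off and estimated separately via Lemma~\ref{lemma_2_2}. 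A secondary technical nuisance is that $q = v_m - v_n$ only sits in $L^2$, so any term requiring a derivative of $q$ must be moved onto the other factor by integration by parts before estimating, which is precisely what the structure of the modified energy is designed to permit.
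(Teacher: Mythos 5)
Your plan follows the paper's proof in all essentials: subtract the two copies of \eqref{eq:(2.1)}, telescope the bilinear differences so that each piece carries a factor of $u_m-u_n$, $v_m-v_n$ or $w_m-w_n$, estimate the $(J_m-J_n)$ remainders by Lemma \ref{lemma_2_2}, cancel the derivative-losing couplings by adding cross terms to the difference energy (the paper's $\mathcal{E}_{m,n}$), and close with Gronwall on $[-T,T]$ using the uniform bound from Proposition \ref{prop_3_1}. Your identification of the main obstacle --- the pairing of $\partial_x(|u_m|^2)-\partial_x(|u_n|^2)$ against the difference of the wave components, and the fact that $v_m-v_n$ lives only in $L^2$ --- is exactly what dictates the paper's choice of the five cross terms.

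There is, however, one concrete step in your sketch that fails as stated: the differential inequality $\frac{d}{dt}\mathcal{E}(t)\le C(M_2)\mathcal{E}(t)+C(M_2)n^{-1/2}$ together with the claimed ``equivalence of $\mathcal{E}$ with the $H_0^1\times L^2\times L^2$ norm of the difference.'' The cross terms needed for the cancellation can only be absorbed, via Gagliardo--Nirenberg and Young, at the price of a defect $-C(M_1)\|u_m-u_n\|_{L^2}^2$ and an additive $O(n^{-1/2})$ error; this is the paper's two-sided bound \eqref{eq:(4.1)}. So the modified energy is \emph{not} equivalent to the difference norm: it controls $\|\partial_x(u_m-u_n)\|_{L^2}^2+\|v_m-v_n\|_{L^2}^2+\|w_m-w_n\|_{L^2}^2$ only modulo $\|u_m-u_n\|_{L^2}^2$, and that quantity is not dominated by $\mathcal{E}$ itself, so Gronwall does not close with $\mathcal{E}$ alone. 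The paper fills this hole with an ingredient your proposal omits: a separate $L^2$-level estimate for $u_m-u_n$ (its Lemma 4.2, an elementary mass-type computation exploiting the cancellation $(iJ_nv_n\cdot J_m(u_m-u_n),J_m(u_m-u_n))_{L^2}=0$), after which Gronwall is applied to the augmented functional $\mathcal{F}_{m,n}=\mathcal{E}_{m,n}+C(M_1)\|u_m-u_n\|_{L^2}^2+C(M_2)n^{-1/2}$ rather than to $\mathcal{E}_{m,n}$. This is a small, fixable addition entirely within your framework, but without it the argument does not close. A secondary inaccuracy: the rate $\mathcal{E}(0)=O(n^{-1/2})$ does not follow from Lemma \ref{lemma_2_2}, which bounds only the $L^2$ and $L^4$ norms of $(J_m-J_n)u_0$, whereas $\mathcal{E}(0)$ involves $\|\partial_x(J_m-J_n)u_0\|_{L^2}$; the paper sidesteps this by proving only $\mathcal{F}_{m,n}(0)\to 0$ (no rate), via Lemma \ref{lemma_2_1}(4), which is all the Cauchy property requires.
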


Now, we prepare to prove Proposition \ref{prop_4_1}.

\begin{lemma}
    For all $t\in[-T,T]$, 
    \begin{equation}
        \frac{d}{dt}\|u_m(t)-u_n(t)\|_{L^2}^2
        \le C(M_2)\left(\|u_m(t)-u_n(t)\|_{L^2}^2+\|v_m(t)-v_n(t)\|_{L^2}^2+\frac{1}{n^{1/2}}\right).
    \end{equation}
\end{lemma}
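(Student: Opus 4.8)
The plan is to compute $\frac{d}{dt}\|u_m-u_n\|_{L^2}^2$ directly from the equations \eqref{eq:(2.1)} and control every resulting term by the three quantities on the right-hand side, using the uniform $H^2\times H^1\times H^1$ bound $M_2$ established in Proposition \ref{prop_3_1}. Writing $\frac{d}{dt}\|u_m-u_n\|_{L^2}^2 = 2(\partial_t(u_m-u_n),u_m-u_n)_{L^2}$ and substituting $\partial_t u_k = i\partial_x^2 u_k - iJ_k(J_k v_k\cdot J_k u_k)$, the $i\partial_x^2$ contribution is purely imaginary and so is killed by the real part in the inner product $(\cdot,\cdot)_{L^2}$. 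Hence only the nonlinear term survives, and I am left estimating $2\,\mathrm{Re}\,(-i[J_m(J_m v_m\cdot J_m u_m) - J_n(J_n v_n\cdot J_n u_n)],\,u_m-u_n)_{L^2}$.

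The core of the proof is a telescoping decomposition of the difference $J_m(J_m v_m\cdot J_m u_m) - J_n(J_n v_n\cdot J_n u_n)$ into pieces that isolate one difference at a time. I would add and subtract intermediate terms so that each summand contains exactly one of the factors $(J_m - J_n)$, $(v_m - v_n)$, or $(u_m - u_n)$ (applied inside or outside the $J$'s), keeping the remaining factors bounded in the right norms. The differences $v_m - v_n$ and $u_m - u_n$ are handled by pulling them out in $L^2$ and controlling the companion factors in $L^\infty$ or $L^4$ via the uniform bound $M_2$ together with $H^1 \hookrightarrow L^\infty$ in one dimension and $\|J_k\|_{\mathcal{L}(X)}\le 1$ from Lemma \ref{lemma_2_1}(3). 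These produce the terms $C(M_2)\|u_m-u_n\|_{L^2}^2$ and $C(M_2)\|v_m-v_n\|_{L^2}^2$ after a Cauchy--Schwarz or Young splitting.

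The remaining pieces are precisely those carrying the operator difference $J_m - J_n$, and these are where the crucial $n^{-1/2}$ gain must come from. Here I would invoke Lemma \ref{lemma_2_2}, which gives $\|(J_m-J_n)f\|_{L^2}\le n^{-1/2}\|\partial_x f\|_{L^2}$ and the analogous $L^4$ bound, applied to $f = u_n$ or $f = v_n$ (or the relevant regularized factor), each of which has $\partial_x f$ uniformly bounded by $M_2$. Combining the self-adjointness of $J_k$ from Lemma \ref{lemma_2_1}(1)--(2) to move operators onto the bounded factor when convenient, every $J_m - J_n$ term is bounded by $C(M_2)\,n^{-1/2}$, contributing the last term in the stated inequality.

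The main obstacle I anticipate is bookkeeping rather than analysis: organizing the telescoping so that no term carries two differences simultaneously, and in particular tracking which regularizations ($J_m$ versus $J_n$) appear in the intermediate terms so that the factor flagged as ``small'' is always $J_m - J_n$ acting on a function whose $H^1$ norm—not merely $L^2$ norm—is controlled by $M_2$, since Lemma \ref{lemma_2_2} requires a derivative on the argument. Care is also needed because one factor in the cubic nonlinearity is $v$, whose uniform bound is only in $H^1$, so I must ensure that any $L^\infty$ estimate falls on $u$ (which is bounded in $H^2$) and never forces a derivative onto a difference that I intend to keep in $L^2$. Once the decomposition respects these constraints, each term matches one of the three targets and the estimate closes.
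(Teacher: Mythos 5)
Your proposal is correct and follows essentially the same route as the paper: the purely imaginary $i\partial_x^2$ contribution vanishes under the real inner product, the nonlinear difference is telescoped so each piece carries exactly one of $(v_m-v_n)$, $(u_m-u_n)$, or $(J_m-J_n)$, and the operator-difference pieces are closed via Lemma \ref{lemma_2_2} together with the uniform bound $M_2$ and Young's inequality. The only cosmetic divergence is that the paper disposes of the piece $J_n v_n\cdot J_m(u_m-u_n)$ by the exact cancellation $(iJ_n v_n\cdot J_m(u_m-u_n),J_m(u_m-u_n))_{L^2}=0$, whereas your crude $L^\infty$ bound on $J_n v_n$ (via $H^1\hookrightarrow L^\infty$) yields $C(M_2)\|u_m-u_n\|_{L^2}^2$, which the stated inequality tolerates anyway.
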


\begin{proof}
Since 
\begin{equation}
        (iJ_n v_n\cdot J_m(u_m-u_n),J_m(u_m-u_n))_{L^2}=0,
\end{equation}
we have
    \begin{align}
        &\frac{d}{dt}\|u_m(t)-u_n(t)\|_{L^2}^2\\
        &=2(\partial_t (u_m-u_n),u_m-u_n)_{L^2}\\
        &=2(i\partial_x^2 (u_m-u_n)-i(J_m(J_m v_m\cdot J_m u_m)-J_n(J_n v_\cdot J_n u_n)),u_m-u_n)_{L^2}\\
        &=-2(iJ_m(v_m-v_n)\cdot J_m u_m,J_m(u_m-u_n))_{L^2}\\
        &\quad -2(i(J_m-J_n)v_n\cdot J_m u_m,J_m(u_m-u_n))_{L^2}\\
        &\quad -2(iJ_n v_n\cdot (J_m-J_n)u_n),J_m(u_m-u_n))_{L^2}\\
        &\quad -2(i(J_m-J_n)(J_n v_n\cdot J_n u_n),J_m(u_m-u_n))_{L^2}\\
        &\le 2\|v_m-v_n\|_{L^2}\|J_m u_m\|_{L^{\infty}}\|u_m-u_n\|_{L^2}\\
        &\quad +2\|(J_m-J_n)v_n\|_{L^4}\|J_m u_m\|_{L^4}\|u_m-u_n\|_{L^2})\\
        &\quad +2\|J_n v_n\|_{L^4}\|(J_m-J_n)u_n\|_{L^4}\|u_m-u_n\|_{L^2}\\
        &\quad +2\|(J_m-J_n)(J_n v_n\cdot J_n u_n)\|_{L^2}\|u_m-u_n\|_{L^2}\\
        &\le C\|v_m-v_n\|_{L^2}\|u_m\|_{H^2}\|u_m-u_n\|_{L^2}\\
        &\quad +C\frac{1}{n^{1/4}}\|\partial_x v_n\|_{L^2}\|u_m\|_{H^2}\|u_m-u_n\|_{L^2}\\
        &\quad +C\frac{1}{n^{1/4}}\|\partial_x u_n\|_{L^2}\|v_m\|_{H^1}\|u_m-u_n\|_{L^2}\\
        &\quad +C\frac{1}{n^{1/2}}(\|\partial_x J_n v_n\|_{L^2}\|J_n u_n\|_{L^{\infty}}+\|J_n v_n\|_{L^4}\|\partial_x J_n u_n\|_{L^4})\|u_m-u_n\|_{L^2}\\
        &=:I_1+I_2+I_3+I_4.
    \end{align}
    By Young's inequality,
    \begin{align}
        &
        \begin{aligned}
            I_1
            &=C\|v_m-v_n\|_{L^2}\|u_m\|_{H^2}\|u_m-u_n\|_{L^2}\\
            &\le C(M_2)\|v_m-v_n\|_{L^2}\|u_m-u_n\|_{L^2}\\
            &\le C(M_2)(\|v_m-v_n\|_{L^2}^2+\|u_m-u_n\|_{L^2}^2),
        \end{aligned}
        \\
        &
        \begin{aligned}
            I_2,I_3
            &\le C(M_2)\left(\|u_m-u_n\|_{L^2}^2+\frac{1}{n^{1/2}}\right),
        \end{aligned}
        \\
        &
        \begin{aligned}
            I_4
            &\le C\frac{1}{n^{1/4}}(\|\partial_x v_n\|_{L^2}\|u_n\|_{H^2}+\|v_n\|_{H^1}\|u_n\|_{H^2})\|u_m-u_n\|_{L^2}\\
            &\le C(M_2)\frac{1}{n^{1/4}}\|u_m-u_n\|_{L^2}\\
            &\le C(M_2)\left(\frac{1}{n^{1/2}}+\|u_m-u_n\|_{L^2}^2\right)
        \end{aligned}
    \end{align}
    We have the statement by collecting these estimates.
\end{proof}

We now consider the $H^1\times L^2\times L^2$ estimate of the sequence of the approximate solutions $(u_m(t)-u_n(t),v_m(t)-v_n(t),w_m(t)-w_n(t))$. For that purpose, we compute
\begin{align}
        \frac{d}{dt}&\left(\|\partial_x (u_m(t)-u_n(t))\|_{L^2}^2+\frac{1}{2}(\|v_m(t)-v_n(t))\|_{L^2}^2+\|w_m(t)-w_n(t)\|_{L^2})\right)\\
        &=-2(\partial_t (u_m-u_n),J_m(J_m v_m\cdot J_m u_m)-J_n(J_n v_n\cdot J_n u_n))_{L^2}\\
        &\quad +(J_m|J_m u_m|^2-J_n|J_n u_n|^2,-\partial_x(w_m-w_n))_{L^2}
\end{align}
where we have used
\begin{align}
&
\begin{aligned}
    2(\partial_t&\partial_x (u_m-u_n),\partial_x (u_m-u_n))_{L^2}\\
    &=-2(\partial_t (u_m-u_n),J_m(J_m v_m\cdot J_m u_m)-J_n(J_n v_n\cdot J_n u_n))_{L^2},
\end{aligned}
\\
&
\begin{aligned}
    (\partial_t &(v_m-v_n),(v_m-v_n))_{L^2}+(\partial_t (w_m-w_n),(w_m-w_n))_{L^2}\\
    &=(J_m|J_m u_m|^2-J_n|J_n u_n|^2,-\partial_x(w_m-w_n))_{L^2}.
\end{aligned}
\end{align}
We introduce the following modified energy:
\begin{align}
    \mathcal{E}_{m,n}(t)&=\|\partial_x (u_m-u_n)(t)\|_{L^2}^2+\frac{1}{2}(\|(v_m-v_n)(t)\|_{L^2}^2+\|(w_m-w_n)(t)\|_{L^2}^2)\\
    &\quad +2(J_m (u_m-u_n)(t),J_m(v_m-v_n)(t)\cdot J_m u_m(t))_{L^2}\\
    &\quad +2(J_m (u_m-u_n)(t),(J_m-J_n)v_n(t)\cdot J_m u_m(t))_{L^2}\\
    &\quad +(J_n v_n(t),|J_m(u_m-u_n)(t)|^2)_{L^2}\\
    &\quad +2(J_m (u_m-u_n)(t),J_n v_n(t)\cdot (J_m-J_n)u_n(t))_{L^2}\\
    &\quad +2((u_m-u_n)(t),(J_m-J_n)(J_n v_n(t)\cdot J_n u_n(t)))_{L^2}.
\end{align}
\begin{lemma}
    For all $t\in\R$,
    \begin{align}
        \frac{d}{dt}\mathcal{E}_{m,n}(t)
        \le C(M_2)\left(\|u_m(t)-u_n(t)\|_{H^1}^2+\|v_m(t)-v_n(t)\|_{L^2}^2+\|w_m(t)-w_n(t)\|_{L^2}^2+\frac{1}{n^{1/2}}\right).
    \end{align}
\end{lemma}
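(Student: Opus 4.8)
The plan is to exploit the algebraic design of $\mathcal{E}_{m,n}$: its five correction terms are chosen precisely so that, upon differentiation in time, they cancel the single genuinely dangerous contribution coming from the Schr\"odinger part of the equation. I would start from the time derivative of the unmodified energy computed just above, whose only problematic contribution is
\begin{equation}
    -2(\partial_t(u_m-u_n),\,J_m(J_m v_m\cdot J_m u_m)-J_n(J_n v_n\cdot J_n u_n))_{L^2}.
\end{equation}
Writing $D:=J_m(J_m v_m\cdot J_m u_m)-J_n(J_n v_n\cdot J_n u_n)$ and substituting the first equation of \eqref{eq:(2.1)} for the difference, $\partial_t(u_m-u_n)=i\partial_x^2(u_m-u_n)-iD$, the self-interaction part drops out since $(iD,D)_{L^2}=\mathrm{Re}(i\|D\|_{L^2}^2)=0$, so this reduces to $-2(i\partial_x^2(u_m-u_n),D)_{L^2}$. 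This is the crux: it carries two derivatives on the difference $u_m-u_n$, which cannot be absorbed by $\|u_m-u_n\|_{H^1}^2$.

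Next I would telescope $D$ into the five pieces that mirror the five correction terms,
\begin{align}
    D&=J_m(J_m(v_m-v_n)\cdot J_m u_m)+J_m((J_m-J_n)v_n\cdot J_m u_m)+J_m(J_n v_n\cdot J_m(u_m-u_n))\\
    &\quad+J_m(J_n v_n\cdot (J_m-J_n)u_n)+(J_m-J_n)(J_n v_n\cdot J_n u_n).
\end{align}
Differentiating each correction term and isolating the contribution in which $\partial_t$ falls on the $u$-difference factor ($J_m(u_m-u_n)$, or $u_m-u_n$ in the last term) and is replaced by its $i\partial_x^2$ part, the self-adjointness of $J_m$ (Lemma \ref{lemma_2_1}) shows this contribution exactly cancels the corresponding piece of $-2(i\partial_x^2(u_m-u_n),D)_{L^2}$; note the outer $J_m$ on the $u$-difference factor is what converts $J_m\partial_t(u_m-u_n)$ into the matching smoothed piece of $D$, while the plain factor in the last correction term matches the piece with an outer $J_m-J_n$. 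For the third correction term the cancellation takes a slightly different form: differentiating $|J_m(u_m-u_n)|^2$ produces $2\mathrm{Re}$ of the required pairing, which matches $J_m(J_n v_n\cdot J_m(u_m-u_n))$ because $J_n v_n$ is real. Thus all second-derivative terms on the difference cancel identically.

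It then remains to collect and estimate the benign remainders: the $w$-coupling term $(J_m|J_m u_m|^2-J_n|J_n u_n|^2,-\partial_x(w_m-w_n))_{L^2}$ from the base energy; the $-iD$ contributions from the $u$-difference factors; the contributions in which $\partial_t$ falls on a $v$-factor, handled via $\partial_t v_n=-\partial_x w_n$ after moving the resulting $x$-derivative onto the other factors by integration by parts; and the contributions in which $\partial_t$ falls on a frozen factor $J_m u_m$, $(J_m-J_n)u_n$, or $J_n u_n$. In this last family a second derivative reappears, but now on $u_m$ or $u_n$ rather than on the difference, so it is controlled by $\|u_m\|_{H^2},\|u_n\|_{H^2}\le M_2$ (Proposition \ref{prop_3_1}); placing the difference in $L^\infty$ via $H^1\hookrightarrow L^\infty$ and the third factor in $L^2$ closes that estimate. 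Every remaining term then carries at most one derivative on the differences and is dominated, using $M_2$, Gagliardo--Nirenberg, and Young's inequality, by the right-hand side, with the factors $J_m-J_n$ acting on $v_n$ or $u_n$ supplying the $n^{-1/2}$ gain through Lemma \ref{lemma_2_2}.

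The main obstacle is precisely the term $-2(i\partial_x^2(u_m-u_n),D)_{L^2}$: since the target bound controls only $\|u_m-u_n\|_{H^1}$ and not $\|u_m-u_n\|_{H^2}$, no direct estimate is available (integration by parts merely relocates the excess derivative), and the whole argument hinges on verifying the term-by-term cancellation above. The delicate bookkeeping is to confirm that after this cancellation every leftover term genuinely carries at most one derivative on the differences, with all remaining high derivatives falling on the individually $H^2\times H^1\times H^1$-bounded solutions, so that each is dominated by the claimed right-hand side.
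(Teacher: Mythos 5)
Your proposal is correct and takes essentially the same route as the paper's proof: the paper likewise telescopes the nonlinear difference into the five pieces mirroring the correction terms of $\mathcal{E}_{m,n}$, uses the self-adjointness of $J_m$ (Lemma \ref{lemma_2_1}) so that the contributions where $\partial_t$ falls on the $u$-difference cancel the dangerous pairing, and then estimates the fourteen leftover terms exactly as you describe (time derivatives on $v$-factors handled via $\partial_t v_n=-\partial_x w_n$ and integration by parts, frozen factors controlled by the $H^2\times H^1\times H^1$ bound $M_2$ from Proposition \ref{prop_3_1}, and the $n^{-1/2}$ gain supplied by Lemma \ref{lemma_2_2}). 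Your only deviation is cosmetic bookkeeping: you cancel just the $i\partial_x^2$ part and keep the $-iD$ remainders to estimate, whereas the paper cancels the full $\partial_t(u_m-u_n)$ pairing, but these are equivalent since $(iD,D)_{L^2}=0$.
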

\begin{proof}
From the equality
\begin{align}
        (-\partial_x &(w_m-w_n),J_m|J_m u_m|^2-J_n|J_n u_n|^2)_{L^2}\\
        &=(-\partial_x (w_m-w_n),J_m(J_m(\overline{u_m}-\overline{u_n})J_m u_m))_{L^2}\\
        &\quad +(-\partial_x (w_m-w_n),J_m((J_m-J_n)\overline{u_n}\cdot J_m u_m))_{L^2}\\
        &\quad +(-\partial_x (w_m-w_n),J_m(J_n \overline{u_n}\cdot J_m(u_m-u_n)))_{L^2}\\
        &\quad +(-\partial_x (w_m-w_n),J_m(J_n\overline{u_n}\cdot(J_m-J_n)u_n)))_{L^2}\\
        &\quad +(-\partial_x (w_m-w_n),(J_m-J_n)|J_n u_n|^2)_{L^2},
\end{align}
we have
    \begin{align}
        \frac{d}{dt}\mathcal{E}_{m,n}(t)
        &=-2(J_m (u_m-u_n),-J_m\partial_x(w_m-w_n)\cdot J_m u_m)_{L^2}\\
        &\quad +2(J_m (u_m-u_n),J_m(v_m-v_n)\cdot J_m \partial_t u_m)_{L^2}\\
        &\quad -2(J_m (u_m-u_n),-(J_m-J_n)\partial_x w_n\cdot J_m u_m)_{L^2}\\
        &\quad +2(J_m (u_m-u_n),(J_m-J_n)v_n\cdot J_m \partial_t u_m)_{L^2}\\
        &\quad -2(J_m (u_m-u_n),-J_n\partial_x w_n\cdot (J_m-J_n)u_n)_{L^2}\\
        &\quad +2(J_m (u_m-u_n),J_n v_n\cdot (J_m-J_n)\partial_t u_n)_{L^2}\\
        &\quad -2(u_m-u_n,(J_m-J_n)(-J_n\partial_x w_n\cdot J_n u_n))_{L^2}\\
        &\quad +2(u_m-u_n,(J_m-J_n)(J_n v_n\cdot J_n \partial_t u_n))_{L^2}\\
        &\quad -(-\partial_x (w_m-w_n),J_m(J_m(\overline{u_m}-\overline{u_n})J_m u_m))_{L^2}\\
        &\quad -(-\partial_x (w_m-w_n),J_m((J_m-J_n)\overline{u_n}\cdot J_m u_m))_{L^2}\\
        &\quad -(-\partial_x (w_m-w_n),J_m(J_n \overline{u_n}\cdot J_m(u_m-u_n)))_{L^2}\\
        &\quad -(-\partial_x (w_m-w_n),J_m(J_n\overline{u_n}\cdot(J_m-J_n)u_n)))_{L^2}\\
        &\quad -(-\partial_x (w_m-w_n),(J_m-J_n)|J_n u_n|^2)_{L^2}\\
        &\quad -(-J_n\partial_x w_n,|J_m(u_m-u_n)|^2)_{L^2}\\
        &=:\sum_{j=1}^{14}II_j
    \end{align}
Here, we estimate each of $II_j$. By using Lemma \ref{lemma_2_1}, Lemma \ref{lemma_2_2}, Gagliardo--Nirenberg inequality and the Sobolev embedding, we have
\begin{align}
    |II_1|
    &=2|(J_m (u_m-u_n),-J_m\partial_x(w_m-w_n)\cdot J_m u_m)_{L^2}|\\
    &\le |2(-\partial_x (J_m(u_m-u_n)\cdot J_m\overline{u_m}),J_m(w_m-w_n))_{L^2}|\\
    &\le \|\partial_x (J_m(u_m-u_n)\cdot J_m\overline{u_m})\|_{L^2}\|w_m-w_n\|_{L^2}\\
    &\le C(\|\partial_x J_m(u_m-u_n)\|_{L^2}\|J_m u_m\|_{L^{\infty}}+\|J_m(u_m-u_n)\|_{L^4}\|\partial_x J_m u_m\|_{L^4})\|w_m-w_n\|_{L^2}\\
    &\le C\|u_m\|_{H^2}\|u_m-u_n\|_{H^1}\|w_m-w_n\|_{L^2}\\
    &\le C(M_2)\|u_m-u_n\|_{H^1}\|w_m-w_n\|_{L^2}.
\end{align}
A similar calculation shows
\begin{align}
    |II_2|,|II_9|,|II_{11}|&\le C(M_2)\|u_m-u_n\|_{H^1}\|v_m-v_n\|_{L^2},\\
    |II_{14}|&\le C(M_2)\|u_m-u_n\|_{H^1}^2
\end{align}
Moreover, we obtain
\begin{align}
    |II_3|
    &=|2(J_m (u_m-u_n),-(J_m-J_n)\partial_x w_n\cdot J_m u_m)_{L^2}|\\
    &\le 2|(-\partial_x (J_m (u_m-u_n)\cdot J_m \overline{u_m}),(J_m-J_n)w_n)_{L^2}|\\
    &\le 2|(\partial_x (J_m (u_m-u_n)\cdot J_m \overline{u_m}),(J_m-J_n)w_n)_{L^2}|\\
    &\le C(\|\partial_x (u_m-u_n)\|_{L^2}\|J_m u_m\|_{L^{\infty}}+\|J_m (u_m-u_n)\|_{L^4}\|\partial_x J_m u_m\|_{L^4})\|(J_m-J_n)w_n\|_{L^2}\\
    &\le C\frac{1}{n^{1/2}}\|u_m-u_n\|_{H^1}\|u_m\|_{H^2}\|\partial_x w_n\|_{L^2}\\
    &\le C(M_2)\frac{1}{n^{1/2}}\|u_m-u_n\|_{H^1}.
\end{align}
Estimating the remaining terms in the similar way as above, we have
\begin{align}
    |II_4|,|II_5|,|II_6|,|II_8|&\le C(M_2)\frac{1}{n^{1/4}}\|u_m-u_n\|_{H^1}\\
    |II_7|&\le C(M_2)\frac{1}{n^{1/2}}\|u_m-u_n\|_{H^1}\\
    |II_{10}|,|II_{12}|&\le C(M_2)\frac{1}{n^{1/4}}\|w_m-w_n\|_{L^2}\\
    |II_{13}|&\le C(M_2)\frac{1}{n^{1/2}}\|w_m-w_n\|_{L^2}.
\end{align}
Applying the Young's inequality, we obtain
\begin{align}
    &\left| \frac{d}{dt}\mathcal{E}_{m,n}(t) \right|\\
    &\le C(M_2)(\|u_m-u_n\|_{H^1}\|w_m-w_n\|_{L^2}+\|u_m-u_n\|_{H^1}\|v_m-v_n\|_{L^2}\\
    &\quad +\frac{1}{n^{1/4}}\|u_m-u_n\|_{H^1}+\frac{1}{n^{1/4}}\|w_m-w_n\|_{L^2}+\|u_m-u_n\|_{H^1}^2)\\
    &\le C(M_2)\left(\|u_m-u_n\|_{H^1}^2+\|v_m-v_n\|_{L^2}^2+\|w_m-w_n\|_{L^2}^2+\frac{1}{n^{1/2}}\right).
\end{align}
This is the desired inequality.
\end{proof}
We are going to estimate each of the inner product terms included in $\mathcal{E}_{m,n}$. By Lemmas $2.1$, $2.2$, the Gagliardo--Nirenberg and Young's inequality, we have
\begin{align}
    2|(J_m &(u_m-u_n),J_m(v_m-v_n)\cdot J_m u_m)_{L^2}|\\
    &\le 2\|J_m (u_m-u_n)\|_{L^4}\|J_m u_m\|_{L^4}\|J_m(v_m-v_n)\|_{L^2}\\
    &\le C\|u_m-u_n\|_{L^2}^{3/4}\|\partial_x (u_m-u_n)\|_{L^2}^{1/4}\|u_m\|_{L^2}^{3/4}\|\partial_x u_m\|_{L^2}^{1/4}\|v_m-v_n\|_{L^2}\\
    &\le C(M_1)\|u_m-u_n\|_{L^2}^2+\frac{1}{2}\|\partial_x (u_m-u_n)\|_{L^2}^{2/5}\|v_m-v_n\|_{L^2}^{8/5}\\
    &\le C(M_1)\|u_m-u_n\|_{L^2}^2+\frac{1}{4}(\|\partial_x u_m-u_n\|_{L^2}^2+\|v_m-v_n\|_{L^2}^2)\\
    &\le \frac{1}{4}\|\partial_x u_m-u_n\|_{L^2}^2+\frac{1}{4}\|v_m-v_n\|_{L^2}^2+C(M_1)\|u_m-u_n\|_{L^2}^2.
\end{align}
Similarly, we have the following inequalities:
\begin{align}
    &2|(J_m(u_m-u_n),(J_m-J_n)v_n\cdot J_m u_m)_{L^2}|\le C(M_2)\frac{1}{n^{1/2}},\\
    &|(J_n v_n,|J_m(u_m-u_n)|^2)_{L^2}|\le \frac{1}{4}\|\partial_x(u_m-u_n)\|_{L^2}^2+C(M_1)\|u_m-u_n\|_{L^2}^2,\\
    &|2(J_m (u_m-u_n),J_n v_n\cdot (J_m-J_n)u_n)_{L^2}|\le C(M_2)\frac{1}{n^{1/2}},\\
    &|2(u_m-u_n,(J_m-J_n)(J_n v_n\cdot J_n u_n))_{L^2}|\le \frac{1}{4}\|\partial_x(u_m-u_n)\|_{L^2}^2+C(M_1)\frac{1}{n^{1/2}}.
\end{align}
Therefore, we obtain
\begin{align}
    &\frac{1}{4}\|\partial_x(u_m-u_n)\|_{L^2}^2+\frac{1}{4}\|v_m-v_n\|_{L^2}^2+\frac{1}{2}\|w_m-w_n\|_{L^2}^2\label{eq:(4.1)}\\
    &\qquad -C(M_1)\|u_m-u_n\|_{L^2}^2-C(M_2)\frac{1}{n^{1/2}}\\
    &\qquad \qquad \le E_{m,n}(t)\\
    &\qquad \qquad \qquad \le \frac{7}{4}\|\partial_x(u_m-u_n)\|_{L^2}^2+\frac{5}{4}\|v_m-v_n\|_{L^2}^2+\frac{1}{2}\|w_m-w_n\|_{L^2}^2\\
    &\qquad \qquad \qquad \qquad +C(M_1)\|u_m-u_n\|_{L^2}^2+C(M_2)\frac{1}{n^{1/2}}.
\end{align}
We put
\begin{equation}
    \mathcal{F}_{m,n}(t)=\mathcal{E}_{m,n}(t)+C(M_1)\|(u_m-u_n)(t)\|_{L^2}^2+C(M_2)\frac{1}{n^{1/2}},
\end{equation}
then the following lemma holds.
\begin{lemma}
    For all $t\in\R$,
    \begin{equation}
        \mathcal{F}_{m,n}(t)\to 0\hspace{2mm}\mathrm{as}\hspace{2mm}m,n\to \infty.
    \end{equation}
\end{lemma}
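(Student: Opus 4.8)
The plan is to combine the two differential inequalities already established with the coercivity bound \eqref{eq:(4.1)} into a single Gr\"onwall estimate, and then to drive the initial data to zero. First I would rewrite the lower half of \eqref{eq:(4.1)} as
\begin{equation*}
\tfrac14\|\partial_x(u_m-u_n)(t)\|_{L^2}^2+\tfrac14\|(v_m-v_n)(t)\|_{L^2}^2+\tfrac12\|(w_m-w_n)(t)\|_{L^2}^2\le \mathcal{F}_{m,n}(t),
\end{equation*}
which shows at once that $\mathcal{F}_{m,n}(t)\ge 0$ and that $\mathcal{F}_{m,n}$ dominates $\|\partial_x(u_m-u_n)\|_{L^2}^2+\|(v_m-v_n)\|_{L^2}^2+\|(w_m-w_n)\|_{L^2}^2$.

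The crucial observation is that $\mathcal{F}_{m,n}$ does \emph{not} by itself control $\|u_m-u_n\|_{L^2}^2$, because the $C(M_1)\|u_m-u_n\|_{L^2}^2$ summand cancels against the negative contribution in the coercivity bound. I would therefore run Gr\"onwall on the augmented quantity
\begin{equation*}
S_{m,n}(t):=\|(u_m-u_n)(t)\|_{L^2}^2+\mathcal{F}_{m,n}(t).
\end{equation*}
Differentiating $S_{m,n}$ and inserting Lemma 4.2 (for $\tfrac{d}{dt}\|u_m-u_n\|_{L^2}^2$, which also enters $\tfrac{d}{dt}\mathcal{F}_{m,n}$ through its very definition) together with Lemma 4.3 (for $\tfrac{d}{dt}\mathcal{E}_{m,n}$), and then replacing $\|\partial_x(u_m-u_n)\|_{L^2}^2$, $\|v_m-v_n\|_{L^2}^2$ and $\|w_m-w_n\|_{L^2}^2$ on the right-hand side by $\mathcal{F}_{m,n}$ via the displayed coercivity bound, I expect
\begin{equation*}
\left|\frac{d}{dt}S_{m,n}(t)\right|\le C(M_2)\left(S_{m,n}(t)+\frac{1}{n^{1/2}}\right),\qquad t\in[-T,T].
\end{equation*}
Gr\"onwall's lemma then yields $S_{m,n}(t)\le e^{C(M_2)|t|}\bigl(S_{m,n}(0)+n^{-1/2}\bigr)$.

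It remains to check that $S_{m,n}(0)\to 0$ as $m,n\to\infty$. Since $u_k(0)=J_ku_0$, $v_k(0)=J_kv_0$, $w_k(0)=J_kw_0$, the quadratic part of $\mathcal{F}_{m,n}(0)$ tends to $0$ by the strong convergences $J_ku_0\to u_0$ in $(H^2\cap H_0^1)(I)$ and $J_kv_0\to v_0$, $J_kw_0\to w_0$ in $H_0^1(I)$ furnished by Lemma \ref{lemma_2_1}(4); each inner-product term of $\mathcal{E}_{m,n}(0)$ carries either a factor $(u_m-u_n)(0)$ or $(v_m-v_n)(0)$ that vanishes, or a factor of the form $(J_m-J_n)(\cdot)$ that is $O(n^{-1/2})$ by Lemma \ref{lemma_2_2}; and the explicit $C(M_2)n^{-1/2}$ vanishes as well. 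Hence $S_{m,n}(0)\to 0$, and together with $n^{-1/2}\to 0$ this gives $S_{m,n}(t)\to 0$ on $[-T,T]$, whence $0\le\mathcal{F}_{m,n}(t)\le S_{m,n}(t)\to 0$, as claimed. The main obstacle I anticipate is purely in the bookkeeping: verifying that appending $\|u_m-u_n\|_{L^2}^2$ genuinely closes the Gr\"onwall loop (so that nothing uncontrolled by $S_{m,n}$ survives on the right), and then confirming term by term that $\mathcal{F}_{m,n}(0)\to 0$ using both the convergence in Lemma \ref{lemma_2_1}(4) and the quantitative difference estimate in Lemma \ref{lemma_2_2}.
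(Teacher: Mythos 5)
Your proposal is correct and takes essentially the same route as the paper: a Gr\"onwall argument built from Lemmas 4.2 and 4.3 and the coercivity bound \eqref{eq:(4.1)}, closed by checking that the modified energy vanishes at $t=0$ via Lemma \ref{lemma_2_1}(4) and Lemma \ref{lemma_2_2}. The only difference is bookkeeping: you append $\|u_m-u_n\|_{L^2}^2$ explicitly (and carry $n^{-1/2}$ as a forcing term in Gr\"onwall), whereas the paper implicitly takes the constant $C(M_1)$ in the definition of $\mathcal{F}_{m,n}$ large enough that $\mathcal{F}_{m,n}$ itself dominates the full $H^1\times L^2\times L^2$ difference norm --- your version makes that step explicit.
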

\begin{proof}
Let $t\in\R$. By Lemma $4.2$, $4.3$ and \eqref{eq:(4.1)}, we obtain
\begin{align}
    \frac{d}{dt}\mathcal{F}_{m,n}(t)&=\frac{d}{dt}\mathcal{E}_{m,n}(t)+C(M_1)\frac{d}{dt}\|u_m-u_n\|_{L^2}^2\label{eq:(4.2)}\\
    &\le C(M_2)(\|u_m-u_n\|_{H^1}^2+\|v_m-v_n\|_{L^2}^2+\|w_m-w_n\|_{L^2}^2)+C(M_2)\frac{1}{n^{1/2}}\\
    &\le C(M_2)\mathcal{F}_{m,n}(t).
\end{align}
From \eqref{eq:(4.2)} we have
\begin{equation}
    \mathcal{F}_{m,n}(t)\le \mathcal{F}_{m,n}(0)\exp{(C(M_2)|t|)}\to 0\hspace{2mm}\mathrm{as}\hspace{2mm}n,m\to\infty,
\end{equation}
where we have used
\begin{equation}
    \mathcal{F}_{m,n}(0)\to 0\hspace{2mm}\mathrm{as}\hspace{2mm}m,n\to\infty.
\end{equation}
\end{proof}
Now, we show Proposition \ref{prop_4_1}.
\begin{proof}[Proof of Proposition \ref{prop_4_1}]
For all $|t|\le T$, using Lemma $4.2$, $4.3$ and $4.4$
\begin{align}
    \|u_m(t)-u_n(t)\|_{H^1}^2+
    &\|v_m(t)-v_n(t)\|_{L^2}^2+\|w_m(t)-w_n(t)\|_{L^2}^2\\
    &\le C(M_2)F_{m,n}(t)\\
    &\le C(M_2)F_{m,n}(0)\exp{(C(M_2)|T|)}\to 0.
\end{align}
\end{proof}

\section{Proof of the main theorem}
First, we show Theorem \ref{theorem1}.
From Proposition $4.1$ and the completeness of $C(\R;H_0^1(I)\times L^2(I)\times L^2(I))$, we obtain $(u,v,w)\in C(\R;H_0^1(I)\times L^2(I)\times L^2(I))$ as the limit of the sequence $(u_n,v_n,w_n)$.
\begin{proposition}
    For all $t\in\R$, the following conservation laws hold;
    \begin{align}
        \|u(t)\|_{L^2}=\|u_0\|_{L^2},\hspace{1mm}E(U(t))=E(U_0).
    \end{align}
\end{proposition}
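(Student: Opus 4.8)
The plan is to pass to the limit $n\to\infty$ in the conservation laws for the approximate problem (Lemma $2.3$, which asserts $M(u_n(t))=M(u_n(0))$ and $E_n(U_n(t))=E_n(U_n(0))$), using the strong convergence $(u_n,v_n,w_n)\to(u,v,w)$ in $C([-T,T];H_0^1(I)\times L^2(I)\times L^2(I))$ furnished by Proposition \ref{prop_4_1}. Fix $t\in\R$ and choose $T\ge|t|$; then $u_n(t)\to u(t)$ in $H_0^1(I)$ and $v_n(t)\to v(t)$, $w_n(t)\to w(t)$ in $L^2(I)$. For the mass, continuity of the norm under $L^2$-convergence gives $M(u_n(t))=\|u_n(t)\|_{L^2}^2\to\|u(t)\|_{L^2}^2$, while at the initial time $u_n(0)=J_nu_0\to u_0$ in $L^2(I)$ by Lemma \ref{lemma_2_1}~(4), so $M(u_n(0))\to\|u_0\|_{L^2}^2$. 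Combining these with $M(u_n(t))=M(u_n(0))$ yields $\|u(t)\|_{L^2}=\|u_0\|_{L^2}$.

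For the energy I would show $E_n(U_n(t))\to E(U(t))$ and $E_n(U_n(0))\to E(U_0)$ separately, and then invoke $E_n(U_n(t))=E_n(U_n(0))$. The quadratic contributions $\|\partial_x u_n\|_{L^2}^2$, $\|v_n\|_{L^2}^2$, $\|w_n\|_{L^2}^2$ converge to their limits directly from the $H^1\times L^2\times L^2$ convergence, and likewise at $t=0$ since $\partial_x J_nu_0\to\partial_x u_0$, $J_nv_0\to v_0$, $J_nw_0\to w_0$ in $L^2$ by Lemma \ref{lemma_2_1}~(4). The only nontrivial point is the cubic term $(J_nv_n,|J_nu_n|^2)_{L^2}\to(v,|u|^2)_{L^2}$. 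Here I would first note $J_nv_n\to v$ in $L^2(I)$, since
\[
\|J_nv_n-v\|_{L^2}\le\|v_n-v\|_{L^2}+\|J_nv-v\|_{L^2}\to0
\]
by Lemma \ref{lemma_2_1}~(3),(4), and similarly $J_nu_n\to u$ in $H_0^1(I)$, hence in $L^4(I)\cap L^\infty(I)$ by the one-dimensional Sobolev embedding $H^1\hookrightarrow L^\infty$.

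Splitting
\[
(J_nv_n,|J_nu_n|^2)-(v,|u|^2)=(J_nv_n-v,|J_nu_n|^2)+(v,|J_nu_n|^2-|u|^2)
\]
and using $|J_nu_n|^2-|u|^2=J_nu_n(\overline{J_nu_n}-\bar u)+\bar u(J_nu_n-u)$ together with the uniform $H^1$ bound $M_1$ from Lemma $2.4$ (which controls $\|J_nu_n\|_{L^4}$ and $\|J_nu_n\|_{L^\infty}$), both terms tend to $0$; the identical computation at $t=0$, where the cubic term is $(J_n^2v_0,|J_n^2u_0|^2)_{L^2}$, gives $E_n(U_n(0))\to E(U_0)$. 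The main obstacle is precisely this cubic term: one must upgrade the $L^2$-convergence of $v_n$ and the $H^1$-convergence of $u_n$ to control a trilinear quantity, which is exactly where the embedding $H^1\hookrightarrow L^\infty$ and the uniform bounds of Section $2$ enter; every other ingredient reduces to continuity of the norm under strong convergence. Since $t\in\R$ is arbitrary and $T\ge|t|$ may be taken as large as needed, this establishes $E(U(t))=E(U_0)$ for all $t\in\R$.
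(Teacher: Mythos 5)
Your proposal is correct and follows exactly the route the paper intends: the paper's entire proof is ``It follows immediately from Lemma 2.3,'' i.e.\ passing to the limit in the approximate conservation laws via the strong convergence of Proposition \ref{prop_4_1}, which is precisely what you carry out. Your treatment of the cubic term $(J_nv_n,|J_nu_n|^2)_{L^2}$ (and of $(J_n^2v_0,|J_n^2u_0|^2)_{L^2}$ at $t=0$) via the uniform bounds and $H^1\hookrightarrow L^\infty$ simply supplies the details the paper leaves implicit.
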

\begin{proof}
It follows immediately from Lemma $2.3$.
\end{proof}
\begin{proposition}
There exists $C=C(\|(u_0,v_0,w_0)\|_{H^2\times H^1\times H^1})>0$ such that for all $t\in\R$,
\begin{equation}
    \|u(t)\|_{H^2}+\|v(t)\|_{H^1}+\|w(t)\|_{H^1}\le C(1+|t|^2).
\end{equation}
\end{proposition}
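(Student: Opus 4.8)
The plan is to upgrade the uniform-in-$n$ bound on the approximate solutions furnished by Proposition \ref{prop_3_1} to the limiting solution $(u,v,w)$ by a weak lower semicontinuity argument, rather than redoing any energy estimate. All of the genuine analytic work is already contained in Proposition \ref{prop_3_1}, which controls $\|u_n(t)\|_{H^2}+\|v_n(t)\|_{H^1}+\|w_n(t)\|_{H^1}$ uniformly in $n$ with growth of order $1+|t|^2$; what remains is a soft functional-analytic passage to the limit.

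First I would fix $t\in\R$. By Proposition \ref{prop_3_1} the sequence $(u_n(t),v_n(t),w_n(t))_n$ is bounded in $H^2(I)\times H^1(I)\times H^1(I)$. Since these are Hilbert spaces, hence reflexive, I can extract a subsequence $(u_{n_k}(t),v_{n_k}(t),w_{n_k}(t))_k$ converging weakly in $H^2\times H^1\times H^1$ to some $(\tilde u,\tilde v,\tilde w)$.

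Next I would identify this weak limit with the strong limit already constructed. By Proposition \ref{prop_4_1} and the completeness used in Section $5$ to define $(u,v,w)$, the full sequence $(u_n(t),v_n(t),w_n(t))$ converges to $(u(t),v(t),w(t))$ strongly, hence weakly, in $H^1\times L^2\times L^2$. Because weak convergence in $H^2$ implies weak convergence in $H^1$, and weak convergence in $H^1$ implies weak convergence in $L^2$, uniqueness of weak limits forces $\tilde u=u(t)$, $\tilde v=v(t)$, $\tilde w=w(t)$. In particular $(u(t),v(t),w(t))\in H^2\times H^1\times H^1$, and the weak lower semicontinuity of the norm in each factor gives
\begin{equation*}
\|u(t)\|_{H^2}+\|v(t)\|_{H^1}+\|w(t)\|_{H^1}\le \liminf_{k\to\infty}\big(\|u_{n_k}(t)\|_{H^2}+\|v_{n_k}(t)\|_{H^1}+\|w_{n_k}(t)\|_{H^1}\big)\le C(M_1)(1+|t|^2).
\end{equation*}
Since $M_1$ depends only on $\|u_0\|_{H^1},\|v_0\|_{L^2},\|w_0\|_{L^2}$, the constant can be written as $C=C(\|(u_0,v_0,w_0)\|_{H^2\times H^1\times H^1})$, and as $t\in\R$ was arbitrary the asserted bound holds for all $t$.

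The only genuinely delicate step is the identification of the weak limit $(\tilde u,\tilde v,\tilde w)$ with the strong limit $(u(t),v(t),w(t))$: this is where one uses the compatibility of the weak topologies along the chain $H^2\hookrightarrow H^1\hookrightarrow L^2$ together with uniqueness of weak limits. I would stress that no compactness of the Sobolev embedding is invoked, which is exactly what allows $I$ to be unbounded, consistent with Remark $(2)$. Everything else is routine once Propositions \ref{prop_3_1} and \ref{prop_4_1} are in hand.
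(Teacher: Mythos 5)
Your proposal is correct and follows essentially the same route as the paper: both transfer the uniform bound of Proposition \ref{prop_3_1} to the limit $(u,v,w)$ by combining the strong convergence in $H^1\times L^2\times L^2$ from Proposition \ref{prop_4_1} with weak convergence and weak lower semicontinuity of the norms (the paper phrases the identification of the limit via a duality pairing against test functions $\psi\in (H^2\cap H_0^1)(I)$ rather than extracting a weakly convergent subsequence, but the mechanism is the same).
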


\begin{proof}
Since for all $\psi\in (H^2\cap H_0^1)(I)$,
\begin{align}
    (u(t),\partial_x^2\psi)_{L^2}&=\lim_{n\to\infty}(u_n(t),\partial_x^2\psi)_{L^2}
    =\lim_{n\to\infty}(\partial_x^2 u_n(t),\psi)_{L^2}
    \le \|\partial_x^2 u_n(t)\|_{L^2}\|\psi\|_{L^2}
    \le M_2(t)\|\psi\|_{L^2},
\end{align}
we have $\partial_x^2 u(t)\in L^2(I)$ for all $t\in\R$ and by density
\begin{equation}
    |(\partial_x^2 u(t),\psi)_{L^2}|\le M_2(t)\|\psi\|_{L^2}\hspace{2mm}\mathrm{for}\hspace{2mm}\mathrm{all}\hspace{2mm}\psi\in L^2(I),
\end{equation}
where $M_2(t)=C(1+|t|^2)$.
Therefore, we obtain
\begin{equation}
    \|\partial_x^2 u(t)\|\le M_2(t)\|\psi\|_{L^2}.
\end{equation}
From $u_n(t)\rightharpoonup u(t)\hspace{2mm}\mathrm{in}\hspace{2mm}L^2(I)$, we have
\begin{align}
    \|\partial_x^2 u(t)\|_{L^2}&\le \liminf_{n\to\infty}\|\partial_x^2 u_n(t)\|_{L^2}\le M_2(t).
\end{align}
Moreover, in the similar way to the above, we obtain
\begin{align}
    &\|\partial_x v(t)\|_{L^2}\le \liminf_{n\to\infty}\|\partial_x v_n(t)\|_{L^2}\le M_2(t),\\
    &\|\partial_x w(t)\|_{L^2}\le \liminf_{n\to\infty}\|\partial_x w_n(t)\|_{L^2}\le M_2(t).
\end{align}
The statement follows from Proposition \ref{prop_3_1}.
\end{proof}
We can complete the proof of Theorem $1.3$.

\begin{proposition}
$(u,v,w)$ satisfies the system \eqref{eq:(1.2)} and
\begin{align}
    &u\in (L_{loc}^{\infty}\cap C_w)(\R;(H^2\cap H_0^1)(I))\cap C(\R;H_0^1(I))\cap C^1(\R;H^{-1}(I)),\\
    &\partial_t u\in(L_{loc}^{\infty}\cap C_w)(\R;L^2(I)),\\
    &v\in C(\R;H_0^1(I))\cap C^1(\R;L^2(I)),\\
    &w\in C(\R;H_0^1(I))\cap C^1(\R;L^2(I)).
\end{align}
\end{proposition}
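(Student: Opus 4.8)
The plan is to pass to the limit in the regularized system \eqref{eq:(2.1)}, read off all the time-regularity of $(u,v,w)$ directly from the limiting equations, upgrade to weak continuity by a standard interpolation lemma, and obtain the strong $H^1$ continuity of the wave components $v,w$ by a semigroup argument. First I would establish \eqref{eq:(1.2)} by passing to the limit in the weak formulation. Testing each equation of \eqref{eq:(2.1)} against $\varphi\in C_c^{\infty}((-T,T)\times I)$ and transferring all derivatives onto $\varphi$ using that $J_n$ is self-adjoint (Lemma \ref{lemma_2_1}), I would let $n\to\infty$. By Proposition \ref{prop_4_1} we have $u_n\to u$ in $C([-T,T];H_0^1(I))$ and $v_n,w_n\to v,w$ in $C([-T,T];L^2(I))$, while $J_n\to\mathrm{Id}$ strongly (Lemma \ref{lemma_2_1}(4)). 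Since $H_0^1(I)\hookrightarrow L^{\infty}(I)$ in one dimension, $J_n u_n\to u$ in $L^{\infty}$ and $J_n v_n\to v$ in $L^2$, whence $J_n(J_n v_n\cdot J_n u_n)\to vu$ and $J_n|J_n u_n|^2\to|u|^2$ in $L^2$. This yields $\partial_t u=i\partial_x^2 u-ivu$, $\partial_t v=-\partial_x w$ and $\partial_t w=-\partial_x v-\partial_x(|u|^2)$ in $\mathcal{D}'$, i.e. \eqref{eq:(1.2)} holds.

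Next I would extract the time-regularity. Passing the uniform bound of Proposition \ref{prop_3_1} to the limit by weak lower semicontinuity of the norm (exactly as in the preceding proposition) gives $u\in L_{loc}^{\infty}(\R;(H^2\cap H_0^1)(I))$ and $v,w\in L_{loc}^{\infty}(\R;H_0^1(I))$, while $u\in C(\R;H_0^1(I))$ and $v,w\in C(\R;L^2(I))$ are already known from the construction of the limit. From $\partial_t u=i\partial_x^2 u-ivu$, with $\partial_x^2 u\in L_{loc}^{\infty}(\R;L^2)$ and $vu\in C(\R;L^2)$ (since $v\in C(\R;L^2)$ and $u\in C(\R;H^1)\hookrightarrow C(\R;L^{\infty})$), I obtain $\partial_t u\in L_{loc}^{\infty}(\R;L^2)$; and since $\partial_x^2 u\in C(\R;H^{-1})$ and $vu\in C(\R;L^2)\subset C(\R;H^{-1})$, also $u\in C^1(\R;H^{-1})$. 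Weak continuity in the top norms then follows from the standard lemma of \cite{MR7}, namely $L_{loc}^{\infty}(\R;X)\cap C(\R;Y)\subset C_w(\R;X)$ for a reflexive embedding $X\hookrightarrow Y$, applied with $(X,Y)=((H^2\cap H_0^1)(I),H_0^1(I))$ for $u$ and $(X,Y)=(L^2(I),H^{-1}(I))$ for $\partial_t u$.

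The hard part will be the strong continuity $v,w\in C(\R;H_0^1(I))$, where a naive $H^1$ energy identity fails because the boundary terms $[\partial_x v\,\partial_x w]_{\partial I}$ arising from integration by parts need not vanish. I would instead view $(v,w)$ as the solution of the linear inhomogeneous problem $\partial_t\binom{v}{w}=\mathcal{A}\binom{v}{w}+\binom{0}{-\partial_x|u|^2}$, where $\mathcal{A}=\left(\begin{smallmatrix}0&-\partial_x\\-\partial_x&0\end{smallmatrix}\right)$ is the lower block of $A$, skew-adjoint on $L^2(I)\times L^2(I)$ with domain $D(\mathcal{A})=H_0^1(I)\times H_0^1(I)$, and hence generates a $C_0$-group. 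The decisive observation is that the source lies in the domain: $\partial_x|u|^2=2\,\mathrm{Re}(\overline{u}\,\partial_x u)$ vanishes on $\partial I$ because $u\in H_0^1(I)$, and $\partial_x^2|u|^2\in L^2$ because $u\in H^2(I)$, so that $(0,-\partial_x|u|^2)\in L_{loc}^1(\R;D(\mathcal{A}))$; together with $(v_0,w_0)\in D(\mathcal{A})$, Duhamel's formula produces a solution in $C(\R;D(\mathcal{A}))\cap C^1(\R;L^2\times L^2)$. By uniqueness of weak solutions of this linear problem — an $L^2$ energy estimate for the difference, whose boundary terms now do vanish since the difference belongs to $H_0^1$ — this solution coincides with $(v,w)$, giving $v,w\in C(\R;H_0^1(I))\cap C^1(\R;L^2(I))$. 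Verifying that the source belongs to $D(\mathcal{A})$, so that regularity is not lost at the Dirichlet boundary, is the crux of the matter; the group then handles the reflections automatically, which is precisely what the failure of the direct energy identity reflects.
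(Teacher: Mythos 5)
Your proposal is correct and follows essentially the same route as the paper: both derive \eqref{eq:(1.2)} by passing to the limit in the weak formulation using the uniform convergence of the regularized nonlinear terms, both obtain the $L^{\infty}_{loc}\cap C_w$ statements from the uniform $H^2\times H^1\times H^1$ bounds via the standard weak-continuity lemma of \cite{MR7}, and both get $v,w\in C(\R;H_0^1(I))\cap C^1(\R;L^2(I))$ from Duhamel's formula for the one-dimensional wave group with source $(0,-\partial_x(|u|^2))\in L^{\infty}_{loc}(\R;H_0^1(I)\times H_0^1(I))$. The only difference is bookkeeping: the paper attaches the Duhamel representation to $(v,w)$ by passing to the limit in the integral equations satisfied by $(v_n,w_n)$, whereas you derive the PDE first and then identify $(v,w)$ with the mild solution through an $L^2$ uniqueness argument for the linear wave system; both variants rest on the same two facts, the unitarity of the group and the $H_0^1$-regularity of the source.
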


\begin{proof}
Let $U(t)$ be a group generated by
\begin{equation}
\begin{pmatrix*}[c]
    0&-\partial_x\\
    -\partial_x&0
\end{pmatrix*},
\end{equation}
and put
\begin{equation}
H_n(s)=
\begin{pmatrix*}[c]
    0\\
    -J_n\partial_x(|J_n u(s)|^2)
\end{pmatrix*},\quad
H(s)=
\begin{pmatrix*}
    0\\
    -\partial_x(|u(s)|^2)
\end{pmatrix*}.
\end{equation}
Here we set
$V_n(t)=(v_n(t),w_n(t))$. Then $V_n(t)$ satisfies the following integral equation:
\begin{equation}
    V_n(t)=U(t)V_n(0)+\displaystyle\int_{0}^{t}U(t-s)H_n(s)ds.
\end{equation}
Since the group $U(t)$ is a unitary group in $L^2$ and
\begin{equation}
    \sup_{t\in[-T,T]}\|J_n\partial_x(|J_n u(t)|^2)-\partial_x(|u(t)|^2)\|_{L^2}\to 0\hspace{2mm}\mathrm{as}\hspace{2mm}n\to\infty,
\end{equation}
$V(t)=(v(t),w(t))$ satisfies
\begin{equation}
    V(t)=U(t)V(0)+\displaystyle\int_{0}^{t}U(t-s)H(s)ds.
\end{equation}
We have $H\in L_{loc}^{\infty}(\R;H_0^1(I)\times H_0^1(I))$ by $\partial_x(|u(t)|^2)\in L_{loc}^{\infty}(\R;H_0^1(I))$. Since we also have the initial data $V(0)\in H_0^1(I)\times H_0^1(I)$, we obtain $V\in C(\R;H_0^1(I)\times H_0^1(I))$.

Next, we show that $(u,v,w)$ satisfies the system \eqref{eq:(1.2)}.
It follows from \eqref{eq:(2.1)} for any $\phi\in \mathcal{D}(-T,T)$ and $\psi\in H_0^1(I)$,
\begin{equation}
\label{eq:5.1}
    \displaystyle\int_{[-T,T]}\{-\langle iu_n(t),\psi\rangle \phi'(t)+\langle \partial_x^2 u_n(t)-J_n(J_n v_n\cdot J_n u_n)(t),\psi\rangle\phi(t)\}dt=0.
\end{equation}
Note that
\begin{equation}
    \sup_{t\in[-T,T]}\|J_n(J_n v_n\cdot J_n u_n)(t)-v(t)u(t)\|_{L^2}\to 0.\label{eq:5.2}
\end{equation}
Applying \eqref{eq:5.2} and the dominated convergence theorem to \eqref{eq:5.1}, we have
\begin{equation}
    \displaystyle\int_{[-T,T]}\{-\langle iu(t),\psi\rangle\phi'(t)+\langle\partial_x^2 u(t)-vu(t),\psi\rangle\phi(t)\}dt=0.
\end{equation}
This implies that for $t\in[-T,T]$
\begin{equation}
    -i\partial_t u(t)=\partial_x^2 u(t)+v(t)u(t)\hspace{2mm}\mathrm{in}\hspace{2mm}H^{-1}(I).
\end{equation}
Similarly, we obtain
\begin{equation}
    \left\{\begin{alignedat}{2}
    &\partial_t v=\partial_x w,\\
    &\partial_t w=-\partial_xv-\partial_x(|u|^2)
\end{alignedat}\right.
\end{equation}
in the distribution sense.

Furthermore, we obtain $u\in C_w(\R;(H^2\cap H_0^1)(I))$ by $u\in L_{loc}^{\infty}(\R;(H^2\cap H_0^1)(I))$.
\end{proof}

\begin{proposition}
    The solutions of \eqref{eq:(1.2)} are unique with respect to the initial values.
\end{proposition}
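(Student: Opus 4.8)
The plan is to run the modified-energy argument of Section $4$ with two exact solutions in place of the approximate pair $(u_m,v_m,w_m),(u_n,v_n,w_n)$; this is in fact simpler, since for exact solutions no mollifier-difference terms occur. Let $(u_1,v_1,w_1)$ and $(u_2,v_2,w_2)$ be two solutions of \eqref{eq:(1.2)} with the same initial data and the regularity of Theorem \ref{theorem1}, and set $a:=u_1-u_2$, $b:=v_1-v_2$, $c:=w_1-w_2$, so that $a(0)=b(0)=c(0)=0$. Fix $T>0$. By Proposition \ref{prop_3_1} both solutions are bounded in $(H^2\cap H_0^1)(I)\times H_0^1(I)\times H_0^1(I)$ on $[-T,T]$ by a constant $M=M(T)$, and since $H^1(I)\hookrightarrow L^\infty(I)$ and $H^2(I)\hookrightarrow W^{1,\infty}(I)$ in one dimension, $u_1,u_2,\partial_x u_1,\partial_x u_2$ are bounded in $L^\infty(I)$ by $C(M)$. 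Subtracting the two copies of \eqref{eq:(1.2)}, the triple $(a,b,c)$ solves the linear system with inhomogeneities $v_1u_1-v_2u_2=v_1 a+b\,u_2$ and $|u_1|^2-|u_2|^2=2\mathrm{Re}(\overline{u_2}\,a)+|a|^2$. All energy identities below are justified by this regularity, exactly as in Section $4$.

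First I would record the $L^2$ bound for $a$, as in Lemma $4.2$: in $\frac{d}{dt}\|a\|_{L^2}^2$ the term $(i\partial_x^2 a,a)_{L^2}$ vanishes after integration by parts because $a\in H_0^1(I)$, and $(iv_1 a,a)_{L^2}$ vanishes because $v_1$ is real, leaving only the contribution of $b\,u_2$, so that
\begin{equation}
\frac{d}{dt}\|a(t)\|_{L^2}^2\le C(M)\left(\|a(t)\|_{L^2}^2+\|b(t)\|_{L^2}^2\right).
\end{equation}
Next I would introduce the modified energy
\begin{equation}
\mathcal{E}(t)=\|\partial_x a\|_{L^2}^2+\frac{1}{2}\left(\|b\|_{L^2}^2+\|c\|_{L^2}^2\right)+2(a,b\,u_1)_{L^2}+(v_2,|a|^2)_{L^2},
\end{equation}
which is obtained from $\mathcal{E}_{m,n}$ by replacing the mollifiers with the identity and discarding every $(J_m-J_n)$ term. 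A plain estimate at the $H^1\times L^2\times L^2$ level does not close: differentiating $\|\partial_x a\|_{L^2}^2$ produces $2(i(v_1 a+b\,u_2),\partial_x^2 a)_{L^2}$, and the part in which a derivative falls on $b$ (equivalently, the second derivative of $a$ paired with $b$) cannot be controlled, because neither $\|\partial_x b\|_{L^2}$ nor $\|\partial_x^2 a\|_{L^2}$ belongs to the energy. The two correction terms in $\mathcal{E}$ are included precisely to absorb such derivative-losing contributions.

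I would then differentiate $\mathcal{E}$, substitute the equations for $\partial_t a,\partial_t b,\partial_t c$, and integrate by parts. The derivative-losing contributions cancel against the time derivatives of the correction terms through the same algebraic identity $\partial_x^2|u|^2-2\mathrm{Re}(\overline{u}\,\partial_x^2 u)=2|\partial_x u|^2$ used in Section $3$, after which only benign quadratic expressions remain; estimating these by the Gagliardo--Nirenberg inequality, the Sobolev embedding, and the bound $M$, exactly as in Lemma $4.3$, yields
\begin{equation}
\frac{d}{dt}\mathcal{E}(t)\le C(M)\left(\|a(t)\|_{H^1}^2+\|b(t)\|_{L^2}^2+\|c(t)\|_{L^2}^2\right).
\end{equation}

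Finally, as in \eqref{eq:(4.1)}, I would estimate the two correction terms of $\mathcal{E}$ by Gagliardo--Nirenberg and Young to show that, for $C(M)$ large enough, $\mathcal{F}(t):=\mathcal{E}(t)+C(M)\|a(t)\|_{L^2}^2$ is equivalent to $\|a\|_{H^1}^2+\|b\|_{L^2}^2+\|c\|_{L^2}^2$; combining the three displayed inequalities then gives $\frac{d}{dt}\mathcal{F}(t)\le C(M)\mathcal{F}(t)$, as in \eqref{eq:(4.2)}. Since the data agree, $\mathcal{F}(0)=0$, and Gronwall's inequality forces $\mathcal{F}\equiv 0$ on $[-T,T]$, whence $a=b=c=0$ there; as $T>0$ is arbitrary, the two solutions coincide on $\R$. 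The only real difficulty is the verification that the correction terms cancel the derivative-losing contributions exactly rather than merely bounding them — a direct energy estimate fails at both the $L^2$ and the $H^1$ level because of the loss of one derivative in the coupling through $vu$ and $\partial_x(|u|^2)$ — and here the bookkeeping is lighter than in Section $4$ because the mollifier-difference remainders are absent, so the computation reduces to the $J_m,J_n\to\mathrm{Id}$ skeleton of Lemmas $4.2$--$4.3$.
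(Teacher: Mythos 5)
Your proposal is correct and is essentially the paper's own argument: the paper's proof of this proposition simply defers to the modified-energy scheme of Proposition \ref{prop_4_1} (and \cite{MR1}), applied to the difference of two exact solutions, which is exactly what you have spelled out with the mollifiers replaced by the identity and the $(J_m-J_n)$ remainders discarded. One minor correction: the a priori bound $M(T)$ on $[-T,T]$ should be obtained from the continuity of each solution with values in $H^2\times H^1\times H^1$ on the compact interval (Proposition \ref{prop_3_1} concerns the approximate solutions, not an arbitrary solution of \eqref{eq:(1.2)}), but this changes nothing in the argument.
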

\begin{proof}
It suffices to check $U_1=U_2$ where $U_j=(u_j,v_j,w_j)\hspace{1mm}(j=1,2)$ is a solution of \eqref{eq:(1.2)}. We obtain the uniqueness in the same way as in the proof of Proposition $4.1$(see \cite{MR1}). 
\end{proof}
\begin{proposition}
$u$ is the strong solution, that is,
\begin{equation}
    u\in C(\R;(H^2\cap H_0^1)(I)).
\end{equation}
\end{proposition}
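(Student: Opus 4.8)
The plan is to upgrade the weak continuity $u\in C_w(\R;(H^2\cap H_0^1)(I))$ already obtained to strong continuity by reducing the statement to the continuity of a single scalar function of $t$. First I would use the equation $i\partial_t u+\partial_x^2 u=vu$, proved in the previous proposition, to write $\partial_x^2 u=vu-i\partial_t u$ in $L^2(I)$ for every $t\in\R$. Since $u,v\in C(\R;H_0^1(I))$ and, in one dimension, $H_0^1(I)$ is continuously embedded in $L^\infty(I)$, the product obeys $vu\in C(\R;L^2(I))$. Consequently $\partial_x^2 u\in C(\R;L^2(I))$ if and only if $\partial_t u\in C(\R;L^2(I))$; and once $\partial_x^2 u\in C(\R;L^2(I))$ is known, the elliptic bound $\|\vp\|_{H^2}\le C(\|\vp\|_{L^2}+\|\partial_x^2\vp\|_{L^2})$ for $\vp\in(H^2\cap H_0^1)(I)$, together with $u\in C(\R;L^2(I))$, gives $u\in C(\R;(H^2\cap H_0^1)(I))$. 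Thus the whole proposition reduces to showing $\partial_t u\in C(\R;L^2(I))$.

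Next, the previous proposition already provides $\partial_t u\in C_w(\R;L^2(I))$. Since $L^2(I)$ is a Hilbert space, weak continuity combined with continuity of the norm implies strong continuity: if $t_k\to t_0$ then $\partial_t u(t_k)\rightharpoonup\partial_t u(t_0)$, and if in addition $\|\partial_t u(t_k)\|_{L^2}\to\|\partial_t u(t_0)\|_{L^2}$ then $\partial_t u(t_k)\to\partial_t u(t_0)$ strongly. Hence it suffices to prove that $t\mapsto\|\partial_t u(t)\|_{L^2}^2$ is continuous.

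To produce this continuity I would run the higher-order energy computation of Section 3 at the level of $\|\partial_t u\|_{L^2}^2$. Differentiating the equation in time and pairing with $\partial_t u$, the contribution of the skew-adjoint part $i\partial_x^2$ vanishes after integration by parts and that of the real potential $-iv$ vanishes because $v$ is real, leaving the identity
\begin{equation}
    \frac{d}{dt}\|\partial_t u(t)\|_{L^2}^2
    =2\,\mathrm{Im}\int_I \partial_t v(t)\,u(t)\,\overline{\partial_t u(t)}\,dx .
\end{equation}
Using $\partial_t v=-\partial_x w$ with $w\in C(\R;H_0^1(I))$, the embedding $H^1(I)\hookrightarrow L^\infty(I)$, and the uniform bound $M_2$, the right-hand side is controlled on each $[-T,T]$ by $2\|\partial_x w\|_{L^2}\|u\|_{L^\infty}\|\partial_t u\|_{L^2}\le C(M_2)$. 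Therefore $t\mapsto\|\partial_t u(t)\|_{L^2}^2$ is Lipschitz, in particular continuous, which closes the chain of reductions.

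The hard part is making the energy identity rigorous on the limit function $u$. A priori $\partial_t u$ only belongs to $L^2(I)$ --- since $\partial_x^2 u\in L^2(I)$ need not lie in $H^1(I)$ --- so neither the integration by parts that cancels the $i\partial_x^2$ term nor the claim that $\|\partial_t u(t)\|_{L^2}^2$ is absolutely continuous with the stated derivative can be justified directly; this is exactly the point where weak and strong continuity differ, and where passage to the limit may lose norm. I would handle it as in Section 3 and following \cite{MR7}: perform the computation on the regularized solutions $u_n$, for which $\partial_t u_n$ is regular enough that all manipulations are legitimate, thereby obtaining a uniform-in-$n$ Lipschitz bound for $t\mapsto\|\partial_t u_n(t)\|_{L^2}^2$ together with the strong convergence $\partial_t u_n(0)\to\partial_t u(0)$ in $L^2(I)$ furnished by the convergence of the regularized initial data. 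Combining this uniform Lipschitz bound with the weak lower semicontinuity of the norm yields the matching upper semicontinuity $\limsup_{t\to t_0}\|\partial_t u(t)\|_{L^2}^2\le\|\partial_t u(t_0)\|_{L^2}^2$, and hence the continuity of $\|\partial_t u(t)\|_{L^2}$, the time-reversibility of the Schr\"odinger flow ensuring the estimate holds on both sides of $t_0$.
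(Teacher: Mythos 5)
Your strategy coincides with the paper's: both proofs upgrade the weak continuity of $\partial_t u$ in $L^2(I)$ to strong continuity by supplying the missing upper semicontinuity of the norm, and both obtain that from a uniform-in-$n$ Lipschitz bound on an energy of the regularized solutions, combined with the strong convergence $\partial_t u_n(0)\to\partial_t u(0)$ in $L^2(I)$ and weak lower semicontinuity. Within that common skeleton you make two variations. You spell out the reduction, left implicit in the paper, from $u\in C(\R;(H^2\cap H_0^1)(I))$ to $\partial_t u\in C(\R;L^2(I))$ via $\partial_x^2u=vu-i\partial_t u$ and the elliptic estimate; and you work with the scalar quantity $\|\partial_t u_n(t)\|_{L^2}^2$ alone instead of the paper's full higher-order energy $F_n$. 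The latter is legitimate and in fact slightly leaner: since Proposition 3.1 already provides $\sup_n\|\partial_x w_n(t)\|_{L^2}\le M_2$ on $[-T,T]$, the single surviving term $2\,\mathrm{Im}\int_I J_n\partial_t v_n\, J_n u_n\,\overline{J_n\partial_t u_n}\,dx$ is bounded by $C(M_2)$ directly, whereas the paper, carrying the wave part $\frac{1}{2}(\|\partial_x v_n\|_{L^2}^2+\|\partial_x w_n\|_{L^2}^2)$ along, must additionally invoke $v,w\in C(\R;H_0^1(I))$ to strip that part off when taking the $\limsup$.

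There is, however, one genuine gap: the conclusion $\limsup_{t\to t_0}\|\partial_t u(t)\|_{L^2}^2\le\|\partial_t u(t_0)\|_{L^2}^2$ at an \emph{arbitrary} $t_0$ does not follow from what you wrote. To pass to the limit $n\to\infty$ in $\|\partial_t u_n(t)\|_{L^2}^2\le\|\partial_t u_n(t_0)\|_{L^2}^2+C|t-t_0|$ you need $\limsup_{n\to\infty}\|\partial_t u_n(t_0)\|_{L^2}^2\le\|\partial_t u(t_0)\|_{L^2}^2$ on the right-hand side, but at a general time you only have the weak convergence $\partial_t u_n(t_0)\rightharpoonup\partial_t u(t_0)$, which gives the inequality in the opposite direction. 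The strong convergence you cite is available only at $t_0=0$, where it comes from the convergence of the regularized data ($J_nu_0\to u_0$ in $H^2$, and similarly for $v_0$, $w_0$); time-reversibility addresses the two-sidedness of the limit $t\to t_0$, not this point. The repair is precisely the first sentence of the paper's proof, which is absent from your write-up: by uniqueness (Proposition 5.4) and the fact that $(u(t_0),v(t_0),w(t_0))\in (H^2\cap H_0^1)(I)\times H_0^1(I)\times H_0^1(I)$ (Proposition 5.2), one may restart the problem at $t_0$, so that it suffices to prove continuity at $t=0$. With that reduction inserted, your argument is complete.
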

\begin{proof}
By the uniqueness of the solution, it suffices to check the continuity at $t=0$. Since we have
\begin{align}
    &\partial_t u(t)\rightharpoonup \partial_t u(0)\ \text{weakly}\ \text{in}\ L^2(I)\ \text{as}\ t\to 0,\\
    &\|\partial_t u(0)\|_{L^2}\le\liminf_{t\to 0}\|\partial_t u(t)\|_{L^2},
\end{align}
we have to check
\begin{equation}
    \limsup_{t\to 0}\|\partial_t u(t)\|_{L^2}\le \|\partial_t u(0)\|_{L^2}.
\end{equation}
By \eqref{eq:3.6}, for $|t|\le T$,
\begin{align}
    &\frac{d}{dt}\left\{\|\partial_t u_n(t)\|_{L^2}^2+\frac{1}{2}\left(\|\partial_x v_n(t)\|_{L^2}^2+\|\partial_x w_n(t)\|_{L^2}^2\right)\right\}\\
    &\le C(M_1)(\mathcal{F}_n(t)+1)\\
    &\le C(M_1)M_2.
\end{align}
Integrating from $0$ to $t$, we have
\begin{align}
    &\|\partial_t u_n(t)\|_{L^2}^2+\frac{1}{2}(\|\partial_x v_n(t)\|_{L^2}^2+\|\partial_x w_n(t)\|_{L^2}^2)\\
    &\le \|\partial_t u_n(0)\|_{L^2}^2+\frac{1}{2}(\|\partial_x v_n(0)\|_{L^2}^2+\|\partial_x w_n(0)\|_{L^2}^2)+Ct.
\end{align}
Therefore, we obtain
\begin{align}
    &\|\partial_t u(t)\|_{L^2}^2+\frac{1}{2}(\|\partial_x v(t)\|_{L^2}^2+\|\partial_x w(t)\|_{L^2}^2)\label{eq:5.3}\\
    &\le \liminf_{n\to \infty}\left\{\|\partial_t u_n(t)\|_{L^2}^2+\frac{1}{2}(\|\partial_x v_n(t)\|_{L^2}^2+\|\partial_x w_n(t)\|_{L^2}^2)\right\}\\
    &\le \liminf_{n\to\infty}\left\{\|\partial_t u_n(0)\|_{L^2}^2+\frac{1}{2}(\|\partial_x v_n(0)\|_{L^2}^2+\|\partial_x w_n(0)\|_{L^2}^2)\right\}+Ct\\
    &\le \|\partial_t u(0)\|_{L^2}^2+\frac{1}{2}(\|\partial_x v_0\|_{L^2}^2+\|\partial_x w_0\|_{L^2}^2)
\end{align}
where in the last inequality we have used
\begin{equation}
    \partial_t u_n(0)\to\partial_t u(0)\hspace{2mm}\mathrm{in}\hspace{2mm}L^2(I)\hspace{2mm}\mathrm{as}\hspace{2mm}n\to\infty.
\end{equation}
Furthermore, taking the $\limsup_{t\to0}$ in \eqref{eq:5.3} with attention to $v,w\in C(\R;H_0^1(I))$, we obtain
\begin{equation}
    \limsup_{t\to 0}\|\partial_t u(t)\|_{L^2}^2\le \|\partial_t u(0)\|_{L^2}^2.
\end{equation}
\end{proof}
The continuous dependence is verified by the same argument as in the proof of the uniqueness. Then we can complete the proof of the main theorems.

\section*{Acknowledgments}
The author would like to thank Professor Masahito Ohta, Dr. Noriyoshi Fukaya, Dr. Yoshinori Nishii and Mr. Kaito Kokubu for their support and helpful discussions for this study.

\bibliography{citation}
\bibliographystyle{amsplain}
\end{document}